\newtheorem{definition}{Definition}
\newtheorem{lemma}{Lemma}[section]
\newtheorem{theorem}[lemma]{Theorem}
\newtheorem*{theoremNN}{Theorem}
\newtheorem{claim*}{Claim}
\newtheorem{remark}[lemma]{Remark}
\numberwithin{equation}{section}
\newcommand{\Pic}{\operatorname{Pic}}
\newcommand{\MC}{\operatorname{MaxCut}}
\newcommand{\SD}{\operatorname{SD}}
\begin{document}
\title{The maximum cut problem on blow-ups of multiprojective spaces.}
\author{
Mauricio Junca}
\author{
Mauricio Velasco
}
\address{
Departamento de matem\'aticas\\
Universidad de los Andes\\
Carrera $1^{\rm ra}\#18A-12$\\ 
Bogot\'a, Colombia
}
\email{mj.junca20@uniandes.edu.co, mvelasco@uniandes.edu.co}
\subjclass[2000]{ 
Primary 14N10, Secondary 05C35, 52A27}
\keywords{Maximum cut problem, blow-ups of multiprojective space, Goemans-Williamson algorithm}
\begin{abstract}
The maximum cut problem for a quintic del Pezzo surface ${\rm Bl}_{4}(\mathbb{P}^2)$ asks: Among all partitions of the $10$ exceptional curves into two disjoint sets, what is the largest possible number of pairwise intersections?  In this article we show that the answer is twelve. More generally, we obtain bounds for the maximum cut problem for the minuscule varieties $X_{a,b,c}:={\rm Bl}_{b+c}(\mathbb{P}^{c-1})^{a-1}$ studied by Mukai and Castravet-Tevelev and show that these bounds are asymptotically sharp for infinite families. 
We prove our results by constructing embeddings of the classes of $(-1)$-divisors on these varieties which are optimal for the semidefinite relaxation of the maximum cut problem on graphs proposed by Goemans and Williamson. These results give a new optimality property of the Weyl orbits of root systems of type $A$,$D$ and $E$.
\end{abstract}
\maketitle
\section{Introduction} Let $a,b$ and $c$ be positive integers and let $T_{a,b,c}$ be the $T$-shaped tree with $a+b+c-2$ vertices shown in Figure~\ref{Tabc}. Define $X_{a,b,c}:= {\rm Bl}_{b+c}(\mathbb{P}^{c-1})^{a-1}$ to be the algebraic variety obtained by blowing up  a set of $b+c$ general points in the multiprojective space $(\mathbb{P}^{c-1})^{a-1}$. If $T_{a,b,c}$ is the Dynkin diagram of a finite root system then the varieties $X_{a,b,c}$ can be thought of as higher-dimensional generalizations of del Pezzo surfaces (obtained when $a=2,c=3$ and $1\leq b\leq 5$) and share many of their fundamental properties. The varieties $X_{a,b,c}$ have been the focus of much recent work by Mukai, Castravet-Tevelev, Serganova-Skorobogatov, Sturmfels, Xu and the second author among others. They have appeared in connection to Mukai's answer to Hilbert's 14-th problem~\cite{Mukai},\cite{CT}, have been studied because of their close relationship with homogeneous spaces~\cite{SS1},~\cite{SS2},~\cite{StVe} and because of their remarkable combinatorial commutative algebra~\cite{SX}.

The varieties $X_{a,b,c}$ contain a finite distinguished collection of codimension one subvarieties called $(-1)$-divisors. The configuration of $(-1)$-divisors plays a fundamental role in the geometry of the varieties $X_{a,b,c}$ analogous to the role played by exceptional curves on Del Pezzo surfaces~\cite{CT}. The configuration of $(-1)$-divisors is independent of the chosen $b+c$ points as long as they are sufficiently general and is captured by the following multigraph.

\begin{figure}[h]
\caption{}
\begin{center}
\includegraphics[height=1.5in]{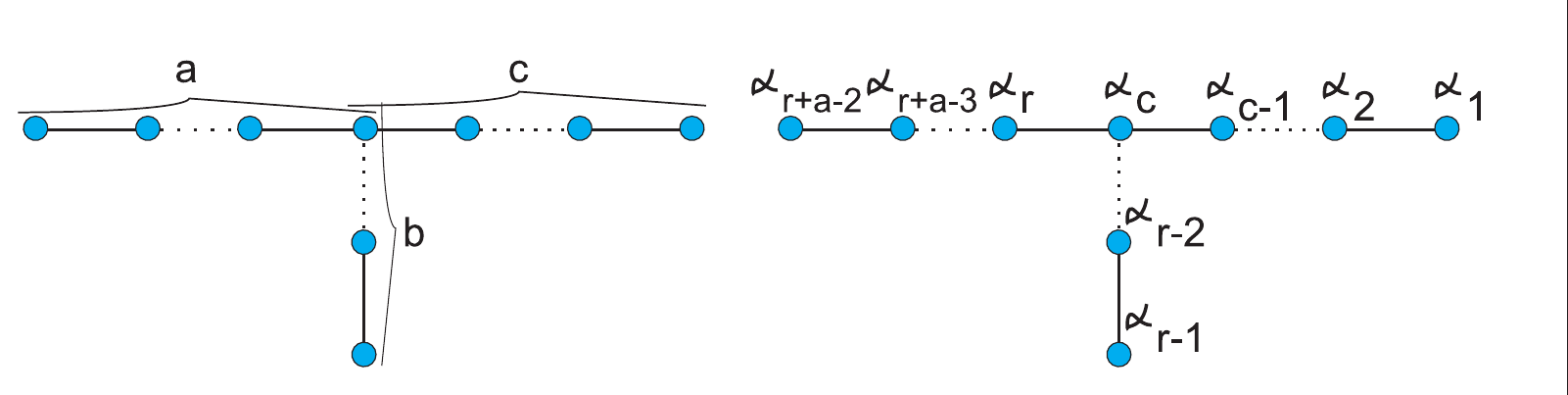}
\end{center}
\label{Tabc}
\end{figure}

\begin{definition} The multigraph of exceptional divisors $G_{a,b,c}$ has as vertices the $(-1)$ divisors and weight $M_{a,b,c}(U,V)=U\cdot V$
for distinct vertices $U$ and $V$ (see Section~\ref{geometry} for details on the construction of the product $U\cdot V$)\end{definition}
\begin{definition} The {\it maximum cut problem} for the varieties $X_{a,b,c}$ asks for the determination of the maximum cut of the multigraphs $G_{a,b,c}$. In more geometric terms it asks: among all partitions of the $(-1)$-divisors on $X_{a,b,c}$ into two sets, what is the largest possible number of pairwise intersections? 
\end{definition}

\begin{figure}[h]
\caption{}
\begin{center}
\includegraphics[height=1.65in]{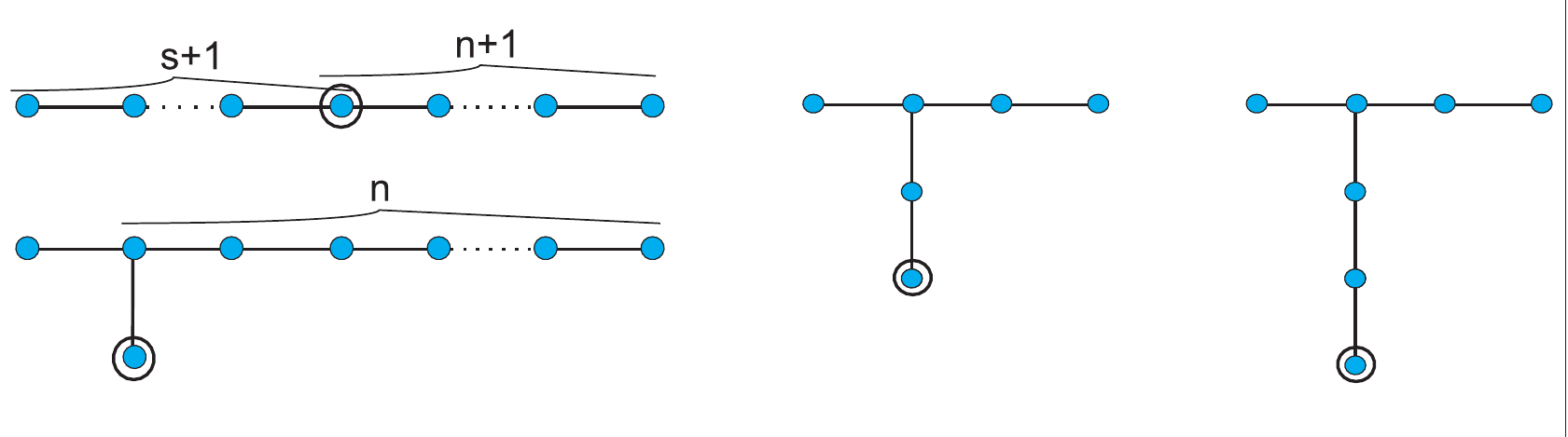}
\end{center}
\label{DD}
\end{figure}

The purpose of this article is to study the maximum cut problem on the minuscule varieties $X_{a,b,c}$ (see Figure~\ref{DD} for a list of minuscule Dynkin diagrams).  Our main result is the construction of bounds on this quantity which are asymptotically sharp for infinite families. 
\begin{theoremNN} Let $T_{a,b,c}$ be a minuscule Dynkin diagram and let $m(a,b,c)$ denote the value of the maximum cut problem for $X_{a,b,c}$.  We have $\lceil\ell(a,b,c)\rceil \leq m(a,b,c)\leq \lfloor u(a,b,c)\rfloor$ where 
\begin{tiny}
\[ 
\begin{array}{l|l|l}
Graph & \ell(a,b,c) & u(a,b,c)\\
\hline
G_{s+1,1,n+1} & \frac{1}{2\pi}\binom{r+s}{r-1}\sum_{k=0}^s \binom{s+1}{k+1}\binom{r-1}{k+1}k\arccos\left( 1-\frac{(r+s)(k+1)}{(s+1)(r-1)}\right) & \frac{r+s}{2(s+1)(r-1)} \binom{r+s}{r-1}\binom{s+1}{2}\binom{r+s-2}{r-3}\\
G_{2,2,n} & \frac{2^{r-2}}{\pi}\sum_{k=0}^{\lfloor\frac{r}{2}\rfloor} \binom{r}{2(k+1)}k\arccos\left(1-\frac{4(k+1)}{r}\right)  & (r-3)2^{2r-6}\\
G_{2,3,3} & 90 & 101.25\\
G_{2,4,3} & 516 & 560\\  
\end{array}
\]
\end{tiny}
Moreover $\lim_{|V|\rightarrow \infty} \frac{m(a,b,c)}{u(a,b,c)}=1$ so the percentage error of approximating the maxcut by its upper bound is asymptotically zero on the infinite families.
\end{theoremNN}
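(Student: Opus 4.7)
The plan is to apply the Goemans--Williamson paradigm: bound $m(a,b,c)=\MC(G_{a,b,c})$ above by the SDP relaxation $\SD(G_{a,b,c})$ and below by random hyperplane rounding of a feasible SDP solution. For any unit vectors $\{v_U\}\subset S^{N-1}$ indexed by the $(-1)$-divisors, the probabilistic method applied to GW rounding gives
$$
\tfrac{1}{\pi}\sum_{U<V} M_{a,b,c}(U,V)\,\arccos\langle v_U,v_V\rangle \;\le\; \MC(G_{a,b,c}) \;\le\; \SD(G_{a,b,c}),
$$
so it suffices to (i) construct a canonical embedding in each minuscule family that evaluates the left-hand side to $\ell(a,b,c)$, and (ii) show that the same embedding is an SDP optimum so that the right-hand side equals $u(a,b,c)$.

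\textbf{The Weyl-orbit embedding.} I would identify the $(-1)$-divisors in a minuscule $X_{a,b,c}$ with the extremal weights of the corresponding minuscule representation of the semisimple Lie algebra whose Dynkin diagram is $T_{a,b,c}$, thus embedding the divisors as a single Weyl-group orbit on a Euclidean sphere in which $\langle v_U,v_V\rangle$ depends only on the intersection number $U\cdot V$. Concretely, for $A_n$ (the family $G_{s+1,1,n+1}$) the divisors lie at the vertices of a hypersimplex; for $D_n$ (the family $G_{2,2,n}$) at the $2^{n-1}$ weights of a half-spin representation; and for $E_6, E_7$ (the cases $G_{2,3,3}, G_{2,4,3}$) at the $27$ lines on a cubic surface and the $56$ exceptional classes of a degree-$2$ del Pezzo. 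Grouping pairs $(U,V)$ according to the finitely many values taken by $U\cdot V$ reduces both $\tfrac{1}{2}\sum M_{a,b,c}(U,V)(1-\langle v_U,v_V\rangle)$ and $\tfrac{1}{\pi}\sum M_{a,b,c}(U,V)\arccos\langle v_U,v_V\rangle$ to binomial sums that produce the closed forms $u(a,b,c)$ and $\ell(a,b,c)$ stated in the theorem.

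\textbf{Optimality for the SDP.} To upgrade $u(a,b,c)$ from a feasible-point evaluation into an upper bound on $\MC(G_{a,b,c})$, I would use the large symmetry group. Because $\operatorname{Aut}(G_{a,b,c})$ contains the Weyl group acting transitively on the $(-1)$-divisors, by averaging, the SDP admits an optimal Gram matrix that is Weyl-invariant. Schur's lemma applied to the isotypic decomposition of the permutation representation of the Weyl group on the divisors then forces this Gram matrix to lie in a low-dimensional space, where the extremum can be pinned down by a Delorme--Poljak-style eigenvalue computation for $M_{a,b,c}$. This identifies the Weyl-orbit embedding as the optimum (or, if needed, as the optimum of a symmetry-strengthened $\SDD$), yielding $u(a,b,c)\geq \SD(G_{a,b,c})\geq \MC(G_{a,b,c})$.

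\textbf{Asymptotic sharpness and the main obstacle.} For the two infinite families, I would establish $\ell/u\to 1$ by a Laplace-type analysis of the two binomial sums: as $|V|\to\infty$, the dominant mass in both sums concentrates on pairs with $\langle v_U,v_V\rangle$ close to $1$, where the expansion $\arccos(1-x)/\pi = \sqrt{2x}/\pi+O(x^{3/2})$ can be compared to $\tfrac{1}{2}(1-(1-x))=x/2$ after renormalization; a uniform tail bound on the remaining terms completes the comparison. The exceptional diagrams $G_{2,3,3}$ and $G_{2,4,3}$ are handled by direct evaluation using the $E_6$ and $E_7$ root systems. The main obstacle I expect is step three: establishing the exact optimality of the Weyl-orbit embedding on each family, which reduces to an eigenvalue problem on a highly structured but non-trivially weighted adjacency operator and must be handled separately in each of the four types, with care taken for the case where the extremal invariant configuration lives in a proper invariant subspace.
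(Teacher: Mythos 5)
Your overall architecture matches the paper's: embed the $(-1)$-divisors as the Weyl orbit of a minuscule weight (equivalently, the normalized projection to $K^{\perp}$), evaluate $\SD(f)$ and $\mathbb{E}[W(f)]$ by grouping pairs according to $U\cdot V$, and certify SDP-optimality by symmetry reduction to an eigenvalue bound (the paper exhibits the dual feasible point $\gamma=-\lambda_1(1,\dots,1)$ and checks $\SD(f)=\SDD(\gamma)$, with $\lambda_1$ extracted from a quadratic equation $B^2=\lambda B+\eta J$ for $B=M-I$; your Schur-lemma/eigenvalue plan is the same mechanism, though you leave all of the actual combinatorics --- which is most of the work --- unexecuted).

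There is, however, a genuine error in your asymptotic-sharpness step. You claim the dominant mass concentrates on pairs with $\langle v_U,v_V\rangle$ close to $1$ and propose to compare $\arccos(1-x)/\pi\approx\sqrt{2x}/\pi$ with $x/2$ there. This fails twice. First, the concentration is in the opposite regime: for $G_{2,2,n}$ the weight-$k$ pairs number $2^{r-2}\binom{r}{2(k+1)}$, which peaks at $2(k+1)\approx r/2$, i.e.\ at inner product $1-\tfrac{4(k+1)}{r}\approx 0$; similarly for type $A$ the hypergeometric weights $\binom{s+1}{k+1}\binom{r-1}{k+1}$ peak where $1-\tfrac{(r+s)(k+1)}{(s+1)(r-1)}\approx 0$. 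The mass concentrates at \emph{orthogonal} pairs, not nearly-parallel ones. Second, even if mass did sit near $x=0$, your expansion gives a per-edge ratio $\frac{\arccos(1-x)/\pi}{x/2}\sim\frac{2\sqrt{2}}{\pi\sqrt{x}}\to\infty$, so the comparison would not yield $\ell/u\to 1$; it would show the two sums differ by an unbounded factor edge-by-edge. The correct mechanism is that the angle concentrates at $\theta=\pi/2$, where the Goemans--Williamson per-edge ratio $\frac{2\theta}{\pi(1-\cos\theta)}$ equals exactly $1$; the paper then splits each sum at the crossing point $\frac{2}{\pi}\arccos(1-y)=1$ (i.e.\ $y=1$) and bounds the discrepancy by a second-moment tail estimate (of order $O(2^{r})$ against a main term of order $r\,4^{r}$ in type $D$, and analogously in type $A$ after reducing to $s=r-2$). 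You would need to replace your Laplace-type argument with something of this form for the limit statement to go through.
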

 We conjecture that $m(a,b,c)=u(a,b,c)$ for $G_{2,2,n}$ and $G_{n+1,1,n+1}$. In Section~\ref{Simulations} we prove the equality for $n\leq 8$ and $n\leq 5$ respectively. 

In general, determining the maximum cut of a weighted graph is a difficult problem that cannot be solved efficiently unless ${\rm P=NP}$. A more feasible alternative is to estimate this number via an approximation algorithm with a performance guarantee of $\beta$. This is a polynomial time algorithm guaranteed to produce a cut whose weight is at least a known percentage $\beta$ of the maximum cut. Probably the most important instance of a maxcut approximation algorithm is the celebrated Goemans and Williamson~\cite{GW}  stochastic approximation algorithm (henceforth GW algorithm) which has a performance guarantee $\beta\approx 87.85\%$. In this article we make a detailed analysis of the behavior of the GW algorithm on the graphs $G_{a,b,c}$ and use it as a theoretical tool to derive the above bounds.

To describe the ingredients leading to our results we briefly describe the GW algorithm on a graph $G$ (see Section~\ref{GWAlgo} for details). In the first stage the maximum cut problem is relaxed to a semidefinite optimization problem whose solution gives an ``optimal" embedding $f:V(G)\rightarrow S^{m-1}\subseteq \mathbb{R}^m$ of the graph in some sphere $S^{m-1}$. The embedding $f$ allows us to associate a cut to every hyperplane $H$ in $\mathbb{R}^m$ by splitting vertices according to the side of $H$ where their image under $f$ lies. It is known that the expected weight of a cut obtained by choosing the hyperplane $H$ uniformly at random is at least $\alpha:=\min_{0\leq\theta \leq \pi} \frac{2}{\pi}\frac{\theta }{1-\cos(\theta)}\approx 0.87856$ times the maximum cut. The second stage of the algorithm consists of uniformly sampling hyperplanes until an above average cut is reached.

\noindent
Our results rely on the following observations,
\begin{enumerate}
\item{The multigraphs $G_{a,b,c}$ are highly symmetric since they are invariant under the action of the Weyl group of the corresponding root system. Such symmetries allow us to choose our optimal embedding to be equivariant.}
\item{The geometry of the varieties provides us with a natural candidate for an equivariant embedding of the graphs $G_{a,b,c}$ in an euclidean space, namely the normalized orthogonal projection of their classes to the orthogonal complement of the canonical class of $X_{a,b,c}$. Our main result is that this embedding $f$ is optimal for the GW semidefinite relaxation. Moreover we also show that this optimal embedding may be thought of as placing the $(-1)$-divisors on the vertices of certain Coxeter matroid polytopes.}
\item{Thanks to symmetry, the dual problem of the GW semidefinite relaxation can be analyzed by understanding the spectrum of the adjacency matrix of the multigraph $G_{a,b,c}$.  
Our determination of this spectrum is the main technical tool used in proving the optimality of the embedding $f$. It relies on the following two facts,
\begin{enumerate}
\item{The concept of strongly regular multigraphs and a characterization of their spectra which we introduce in Section~\ref{srmgs}. This class of graphs contains the multigraphs $G_{a,b,c}$.}
\item{The canonical bijection between the $(-1)$-divisors on minuscule varieties $X_{a,b,c}$ and the weights of the corresponding minuscule representations (see Section~\ref{geometry} for details). These bijections allow us reduce the necessary calculations to elementary combinatorial identities.}
\end{enumerate}
}
\end{enumerate}

An important quantity in our analysis is the {\it performance ratio} $\alpha_G$ of a graph $G$, defined as the ratio of the expected weight of a random cut divided by the optimal value of the GW semidefinite relaxation (see Section~\ref{GWAlgo} for detail). This ratio satisfies $\alpha\leq \alpha_G\leq 1$ and is a one dimensional measure of the performance of the algorithm on a graph $G$, increasing as the performance of the algorithm improves.  In this article we also study how symmetry affects this quantity. We derive formulas for the performance ratio on doubly transitive graphs and are able to analyze its behavior for large classes of strongly regular graphs (see Section~\ref{GWsymmetry} for precise statements). Our main result in this direction is the following

\begin{theoremNN}  For an integer $m\geq 2$ let $\mathcal{R}(-m)$ be the collection of doubly transitive strongly regular graphs with smallest eigenvalue $-m$. The essential performance ratio $e(\mathcal{R}(-m))$ equals one, where
\[ e(\mathcal{R}(-m)) := \lim_{n\rightarrow \infty} \inf\{ \alpha_{G}: G\in \mathcal{R}(-m), |G|\geq n\}\]
\end{theoremNN}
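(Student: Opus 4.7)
The plan is to reduce the claim to an elementary asymptotic statement about the degree sequence of primitive strongly regular graphs. For $G\in\mathcal{R}(-m)$, the double transitivity hypothesis implies that the automorphism group acts transitively both on the set of vertices and on the set of unordered edges, and hence the equivariant GW semidefinite programming machinery from Section~\ref{GWsymmetry} applies. Concretely, one may choose an optimal SDP embedding that factors through the projector onto the smallest eigenspace of the adjacency matrix, and this embedding assigns a common inner product $\beta$ to every adjacent pair. For a $k$-regular vertex-transitive graph with smallest eigenvalue $-m$, the spectral embedding realizes the SDP optimum $n(k+m)/4$, and equating
\[ \frac{n(k+m)}{4} \;=\; \sum_{\{i,j\}\in E} \frac{1-\beta}{2} \;=\; \frac{nk(1-\beta)}{4} \]
forces $\beta=-m/k$.

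Substituting this identity into the GW formula for the expected weight of a random hyperplane cut, namely $|E|\arccos(\beta)/\pi$, yields the closed form
\[ \alpha_G \;=\; \frac{2\,\arccos(-m/k)}{\pi\,(1+m/k)}, \]
which depends on $G$ only through the ratio $m/k$ and which tends to $1$ as $m/k\to 0$. The theorem will therefore follow once one shows that $k\to\infty$ along every sequence of graphs $G\in\mathcal{R}(-m)$ with $|G|\to\infty$. For this I would invoke the elementary parameter identity $k(k-\lambda-1)=(N-k-1)\mu$, valid for any strongly regular graph $(N,k,\lambda,\mu)$: since a graph in $\mathcal{R}(-m)$ has three distinct eigenvalues and is therefore primitive, one has $\mu\geq 1$, and combining this with the trivial bound $\lambda\leq k-1$ gives $k^{2}\geq N-k-1$, hence $k\geq\tfrac12(\sqrt{4N-3}-1)\to\infty$.

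Together these two steps show that for every $\varepsilon>0$ there exists $n_0$ with $\alpha_G>1-\varepsilon$ for all $G\in\mathcal{R}(-m)$ with $|G|\geq n_0$, which is exactly the statement $e(\mathcal{R}(-m))=1$. The point that needs the most care is the first step rather than the asymptotics: one must verify that the double transitivity hypothesis really permits one to take the GW-optimal embedding to be equivariant, so that every edge contributes the same $\beta$, and that this equivariant embedding actually attains the spectral bound $n(k+m)/4$. Both statements are consequences of the general equivariant reduction principle for semidefinite programs, which in the presence of the large automorphism group provided by double transitivity collapses the SDP onto the Bose--Mesner algebra of the underlying association scheme.
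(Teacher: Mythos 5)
Your proof is correct, and its second half takes a genuinely different --- and more elementary --- route than the paper. The first half, namely the closed form $\alpha_G=\frac{2\arccos(-m/k)}{\pi(1+m/k)}$ obtained by averaging optimal primal and dual SDP solutions over the automorphism group and matching the resulting constant inner product $\beta=-m/k$ against the dual value $n(k+m)/4$, is exactly the content of part (2) of Lemma~\ref{symAlpha}, which the paper proves once and then simply cites; your equivariant-reduction sketch reproduces that argument rather than replacing it. Where you genuinely diverge is in showing that $k\to\infty$ as $|G|\to\infty$ within $\mathcal{R}(-m)$: the paper invokes Neumaier's classification of strongly regular graphs with least eigenvalue $-m$ (a finite exceptional list plus complete multipartite, Steiner, and Latin-square graphs, in each of which the degree visibly grows with $N$), whereas you use the elementary parameter identity $k(k-\lambda-1)=(N-k-1)\mu$ together with $\mu\ge 1$ and $\lambda\le k-1$ to get $k\ge\tfrac12(\sqrt{4N-3}-1)$. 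Your route is shorter, self-contained, and quantitative --- it yields $k\gtrsim\sqrt{N}$ and hence an effective lower bound on $\alpha_G$ in terms of $N$ and $m$ alone --- while the paper's route buys only the contextual link to the classification. One small inaccuracy to fix: ``three distinct eigenvalues, therefore primitive'' is false (the complete multipartite graph $K_{t\times m}$ has spectrum $\{(t-1)m,\,0,\,-m\}$ yet is imprimitive), but the conclusion you actually need survives, since a strongly regular graph with $\mu=0$ is a disjoint union of cliques and has least eigenvalue $-1\ne -m$ for $m\ge 2$, so every member of $\mathcal{R}(-m)$ indeed has $\mu\ge 1$.
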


The above Theorem says that the performance of the Goemans-Williamson algorithm improves as the size of the graphs under analysis increases approaching its theoretically possible maximum. These results can be thought of as the flip-side of worst-case performance analysis. We are no longer interested in determining the worst-case performance of an algorithm but instead we want to characterize rich classes of graphs where performance is provably better than expected. These results are especially interesting in the case of the Goemans Williamson algorithm since it is known~\cite{UGC} that either $\alpha$ is the best possible performance ratio of a certified approximation algorithm to the maximum cut problem or the Unique Games Conjecture does not hold. A possible strategy to look for counterexamples is to find classes of graphs where the performance ratio is provably better than $\alpha$. 

The material in this article is organized as follows: Section~\ref{preliminaries} contains background information on cuts and multigraphs (\S~\ref{preliminariesCombinat}), the Goemans-Williamson algorithm and performance ratios (\S~\ref{GWAlgo}.). Section~\ref{geometry} contains background material on the geometry of the varieties $X_{a,b,c}$ and a formulation of the maximum cut problem. Section~\ref{GWsymmetry} studies the role of symmetry in the GW semidefinite relaxation and proves the second Theorem above. Section~\ref{srmgs} introduces strongly regular multigraphs and characterizes their spectra. Section~\ref{maxcutMain} contains the main results of the article on the maximum cut problem for minuscule blowups of multiprojective space.

{\bf Acknowledgements.} We thank Felipe Rinc\'on and Bernd Sturmfels for helpful conversations during the completion of this work.

\section{Preliminaries}\label{preliminaries}
\subsection{Multigraphs, cuts and strongly regular graphs.} \label{preliminariesCombinat}
\begin{definition} A multigraph $G$ is a pair $(V(G),M(G))$ where $V(G)$ is a finite, totally ordered set of vertices and $M(G): V\times V\rightarrow \mathbb{R}_+$  is a nonnegative function satisfying $M(G)(v,v)=0$ and $M(G)(v,w)=M(G)(w,v)$. We can represent the function $M(G)$ via a symmetric matrix letting $M_{ij}:=M(G)(i,j)$ and thus we will refer to $M(G)$ as the adjacency matrix of $G$. Two vertices $i,j\in G$ are said to be adjacent iff $M(G)(i,j)>0$.\end{definition}
We will often drop $G$ from the notations $V(G)$, $M(G)$ when $G$ is clear from the context. Note that A graph $G$ is a multigraph whose adjacency matrix $M$ has entries in $\{0,1\}$. All our graphs are thus finite, simple, undirected, loopless graphs.
\begin{definition} The automorphism group of a multigraph $G$ denoted ${\rm Aut}(G)$ is the set of permutations $\sigma \in {\rm Sym}(V)$ such that $M(\sigma(i),\sigma(j))=M(i,j)$ for all $i,j\in V$. \end{definition}
\begin{definition} A multigraph is transitive if the action of ${\rm Aut}(G)$ on $V(G)$ is. A multigraph is doubly transitive if for every vertex $v\in V$ and every $i,j\in V$ with $M(v,i)=M(v,j)>0$ there is an element $\sigma$ of the stabilizer of $v$ with $\sigma(i)=j$.
\end{definition}
\begin{definition} A cut on a multigraph $G$ is a partition of its vertex set $V(G)$ into two parts $\{S,S^{c}\}$. The weight of the cut $(S,S^{c})$ is 
\[w(S,S^{c})=\sum_{i<j: i\in S, j\in S^{c}} M(G)(i,j)\]
The maximum cut problem asks for the maximum weight among all cuts of $G$, that is, to determine
\[\MC(G):= \max_{\{S,S^{c}\}} w(S,S^{c})\]
\end{definition}
It is well known that the problem of determining whether the maximum cut of a graph is larger than a given value is NP-complete and thus it cannot be solved efficiently for all graphs unless ${\rm P}={\rm NP}$. A viable alternative is to use a polynomial time certified approximation algorithm such as the Goemans Williamson algorithm, described in the next section.

Finally we recall the following definition, which will be generalized in Section~\ref{srmgs}
\begin{definition} A graph $G$ is a strongly regular graph with parameters $(v,d,c,k)$ if it has $v$ vertices, it is regular of degree $d$, every two adjacent vertices have exactly $c$ common neighbors and every two disjoint vertices have exactly $k$ common neighbors. \end{definition}

\subsection{The Goemans-Williamson maxcut approximation algorithm and performance ratios.}\label{GWAlgo}

\subsubsection{A description of the algorithm}
Goemans and Williamson introduced in~\cite{GW} a certified stochastic approximation algorithm for the maximum cut problem. The algorithm proceeds in two stages: first, it introduces a semidefinite relaxation of the maximum cut problem (which can be solved in polynomial time) and then a random rounding procedure which allows us to produce cuts whose weight is guaranteed to be at least $\alpha\%\approx 87.8\%$ of the maximum cut of $G$. We describe these two steps in greater detail, 
\begin{enumerate}
\item{Semidefinite Relaxation: The maximum cut problem on a multigraph $G=(V,M)$ can be stated as a quadratic integer optimization problem. We assign one variable $x_i$ to each vertex and encode a cut $(S,S^{c})$ by letting $x_i=1$ if $i\in S$ and $x_i=-1$ otherwise. With this notation the maximum cut of $G$ equals
\[\MC(G)=\max_{ x_i\in \{-1,1\}} \sum_{i<j} M_{i,j}\frac{1-x_ix_j}{2}.\]
We can think of $x_i\in \{-1,1\}$ as an assignment from the vertices of $G$ to points in the $0$-sphere. More generally, for an assignment $f:V\rightarrow S^{p}$ we define
\[\SD(f):=\sum_{i<j} M_{i,j} \frac{1-f(i)\cdot f(j)}{2}\]
and letting $f$ run over all assignments of vertices to vectors in some sphere we have
\[\MC(G)\leq \max_{f} \SD(f) = \max_{X\succeq 0, X_{ii}=1}\sum_{i<j}M_{i,j} \frac{1-X_{ij}}{2}=:SD(G)\] 
Where the inequality occurs since the set of assignments includes the integral assignments $f(i)=x_ie_p$ and the equality because a symmetric matrix $X_{ij}$ is positive semidefinite iff it admits a Cholesky factorization. The determination of the rightmost quantity is a semidefinite optimization problem and thus can be solved in polynomial time~\cite{NN}. 
}
\item{ Randomized Rounding: For any assignment $f:V\rightarrow S^p$ and any hyperplane $H\in (\mathbb{R}^{p+1})^*$  we can produce a cut $(S(H),S(H)^c)$ by letting 
\[S(H)=\{i\in V: H(i)\geq 0\}.\] 
Goemans and Williamson study the weights of cuts produced by hyperplanes chosen uniformly at random in the dual unit sphere. They compute the expected value of the random weight $W(f)$ of cuts produced in this manner and relate it with the value of $\SD(f)$ (see~\cite{GW}[Theorems 2.1,2.3] for details),
\[ \mathbb{E}[W(f)]:=\mathbb{E}\left(w(S(H),S(H)^c)\right) = \sum_{i<j} \frac{\arccos(f(i)\cdot f(j))}{\pi} \geq \alpha \SD(f)\]
where
\[\alpha=\min_{0\leq\theta \leq \pi} \frac{2}{\pi}\frac{\theta }{1-\cos(\theta)}\approx 0.87856\]
If $f^*$ is an optimal embedding then $\MC(G)\leq SD(f^*)=SD(G)$ and thus the above randomized rounding procedure yields cuts whose weight is at least $87.85\%$ of $\MC(G)$.
}
\end{enumerate}
It is known~\cite{UGC} that if the unique games conjecture holds then the Goemans Williamson algorithm has the the best possible approximation ratio $\alpha$ for the maximum cut problem. 
\subsubsection{Performance ratios} \label{performanceRatios}
From the above analysis, we have the following chain of inequalities,
\[ \alpha SD(G)\leq \mathbb{E}[W(f)] \leq \MC(G)\leq SD(G)\] 
and thus the ratio $\frac{\mathbb{E}[W(f^*)]}{SD(G)}$ is a good one dimensional measure of the performance of the algorithm on a graph $G$. 
\begin{definition} The performance ratio of the GW algorithm on a graph $G$ is the quantity
\[\alpha_G:=\frac{\mathbb{E}[W(f^*)]}{SD(G)}\]
For a set of graphs $\mathcal{G}$ the performance ratio of the algorithm on $\mathcal{G}$ is the quantity
\[\alpha(\mathcal{G}):= \inf_{G\in \mathcal{G}} \alpha_{G}\]
and the essential performance ratio is given by
\[ e(\mathcal{G}) = \lim_{n\rightarrow \infty} \inf\{ \alpha_{G}: G\in \mathcal{G}, |G|\geq n\}\]
\end{definition}
Note that $\alpha\leq \alpha(\mathcal{G})\leq 1$ and that the quality of the semidefinite relaxation and the rounding technique on $\mathcal{G}$ are simultaneously controlled by $\alpha(\mathcal{G})$, improving as this quantity increases. On the other hand the essential performance ratio captures the behavior of the algorithm as we look at larger and larger instances. By a Theorem of Karloff~\cite{Karloff} it is known that $\alpha(\mathcal{G})=\alpha$ where $\mathcal{G}$ is the set of all transitive graphs.

\subsubsection{The semidefinite dual problem}
Let $G$ be a multigraph. For $\gamma:=(\gamma_1,\dots, \gamma_{|V|})\in \mathbb{R}^{|V|}$ let 
\[SD^*(\gamma):= \frac{1}{2}\sum_{i<j} M(i,j) +\frac{1}{4}\sum_{i\in V} \gamma_i\]
A simple direct calculation shows that the dual of the semidefinite relaxation of the maxcut problem for $G$ is
\[SD^{*}(G):=\min_{\gamma\in F}SD(\gamma)\text{   with  $F:=\{\gamma:  M+{\rm diag}(\gamma)\succeq 0\}$},\]
where ${\rm diag}(\gamma)$ is the diagonal matrix with ${\rm diag}(\gamma)_{ii}=\gamma_i$. Recall that by strong duality we have $SD(G)=SD^{*}(G)$ for every multigraph $G$.

\section{The geometry of the varieties $X_{a,b,c}$.}\label{geometry}
In this section we recall some basic facts about the geometry of varieties $X_{a,b,c}$ studied by Mukai~\cite{Mukai} and Castravet-Tevelev~\cite{CT}. Let $a,b,c$ be positive integers with $a,c\geq 2$ and let $r:=b+c$. Assume $a\leq c$ always and $c>2$ if $a=2$. Let $T_{a,b,c}$ be a $T$ shaped tree with $a+b+c-2$ vertices and let $X_{a,b,c}:={\rm Bl}_{b+c}\left( (\mathbb{P}^{c-1})^{a-1}\right)$ be any variety obtained by blowing up $b+c$ sufficiently general points in the product. 

Henceforth we assume $T_{a,b,c}$ is the Dynkin diagram of a finite root system  (i.e. $\frac{1}{a}+\frac{1}{b}+\frac{1}{c}>1$ ). These varieties can be thought of as higher-dimensional analogues of del Pezzo surfaces (obtained when $a=2$, $c=3$ and $r\leq 8$) and share many of their fundamental properties. To describe the similarities we need to introduce the following terminology, 

\begin{definition} The Picard group ${\rm Pic}(X_{a,b,c})$ is the free $\mathbb{Z}$-module of rank $a+r-1$ generated by  the classes $H_1,\dots, H_{a-1}$ of pullbacks of the hyperplane sections of the factors $\mathbb{P}^{c-1}$ together with $r$ classes of the exceptional divisors above the blown up points $E_1,\dots, E_r$. The canonical class is 
\[K:=-c\sum_{i=1}^{a-1}H_i + \left(ac-a-c\right)\sum_{j=1}^rE_j.\] 

Define a symmetric bilinear form on ${\rm Pic}(X_{a,b,c})$ by 
\[
\begin{array}{lll}
E_i\cdot E_j = -\delta_{ij} & H_i\cdot H_j=c-1-\delta_{ij} & H_i\cdot E_j=0\\
\end{array}
\]
\end{definition}
The following Lemma~\cite[Lemma 2.1]{CT} clarifies the relationship between the combinatorics of $T_{a,b,c}$ and the geometry of $X_{a,b,c}$.  
\begin{lemma} ${\rm Pic}(X_{a,b,c})$ has another basis $\alpha_1,\dots, \alpha_{a+r-2}, E_r$ where
\[\alpha_1=E_1-E_2, \dots, \alpha_{r-1} = E_{r-1}-E_r \]
\[\alpha_{r}=H_1-E_1-\dots -E_c\]
\[\alpha_{r+1}=H_2-H_1,\dots, \alpha_{r+a-2}= H_{a-1}-H_{a-2}\]
Moreover, $\alpha_1,\dots, \alpha_{r+a-2}$ are a basis for the orthogonal complement $K^{\perp}$ and a system of simple roots of a finite root system with Dynkin diagram $T_{a,b,c}$ (see Figure~\ref{Tabc}).
\end{lemma}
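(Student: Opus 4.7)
The plan is to verify the three parts of the claim separately: that $\{\alpha_1, \dots, \alpha_{r+a-2}, E_r\}$ is a $\mathbb{Z}$-basis of $\operatorname{Pic}(X_{a,b,c})$, that the $\alpha_i$'s span $K^{\perp}$, and that their Gram matrix realizes the Cartan matrix of $T_{a,b,c}$. The first step is pure linear algebra: the relations $E_i = \alpha_i + E_{i+1}$ (for $i < r$), $H_1 = \alpha_r + E_1 + \cdots + E_c$, and $H_k = H_{k-1} + \alpha_{r+k-1}$ (for $2\le k \le a-1$) express every element of the old basis $\{H_i, E_j\}$ in terms of the new one with integer coefficients and in an order that makes the change of basis triangular, hence invertible over $\mathbb{Z}$.

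For the orthogonality $\alpha_i \cdot K = 0$, the differences $E_i - E_{i+1}$ and $H_k - H_{k-1}$ pair trivially with $K = -c\sum H_i + (ac-a-c)\sum E_j$ because $K$ is symmetric in each family. The one nontrivial root is $\alpha_r$, where direct expansion gives $K \cdot H_1 = -c\bigl((c-2) + (a-2)(c-1)\bigr) = -c(ac-a-c)$, matching $\sum_{j=1}^{c} K \cdot E_j = -c(ac-a-c)$, so the difference vanishes. To upgrade this to a spanning statement over $\mathbb{Z}$, I would compute $K^2 = (ac-a-c)\cdot abc \cdot (1/a + 1/b + 1/c - 1)$, which is strictly positive under the hypothesis that $T_{a,b,c}$ is a finite Dynkin diagram (and $a=c=2$ is excluded). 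Consequently $K \notin K^{\perp}$, so $\operatorname{rk}(K^{\perp}) = a+r-2$; and since $K \cdot E_r = a+c-ac \neq 0$, any element of $K^{\perp}$ written in the new basis must have zero coefficient on $E_r$, so the $\alpha_i$ form a $\mathbb{Z}$-basis of $K^{\perp}$.

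For the Dynkin diagram, I would compute the full Gram matrix of the $\alpha_i$'s. A direct calculation yields $\alpha_i \cdot \alpha_i = -2$ for every $i$, placing all simple roots on equal footing. The three families of roots split the off-diagonal analysis: the chain $\alpha_1,\dots,\alpha_{r-1}$ inherits the standard $A_{r-1}$ incidence from the $E_i - E_{i+1}$ identities; the chain $\alpha_{r+1},\dots,\alpha_{r+a-2}$ yields an $A_{a-2}$ sub-diagram via $(H_k - H_{k-1})\cdot(H_{k+1} - H_k) = 1$; and the mixed root $\alpha_r$ pairs nontrivially with exactly two roots in the rest, namely with $\alpha_c$ (the indicator difference $[i\le c] - [i+1\le c]$ vanishes except at $i=c$) and with $\alpha_{r+1}$ (from $H_1\cdot(H_2-H_1) = (c-1)-(c-2) = 1$). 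The resulting incidence graph is therefore the $T$-shape with trivalent vertex $\alpha_c$ and three arms of lengths $c-1$, $b-1 = r-1-c$, and $a-1$, i.e.\ precisely $T_{a,b,c}$.

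The main obstacle is primarily organizational: keeping the three index families disjoint and correctly identifying the branching of $\alpha_r$ off the $E$-chain at position $c$. The one genuine arithmetic input is the identity $K\cdot\alpha_r = 0$, which encodes the precise balance between the coefficients $-c$ and $ac-a-c$ in the canonical class, together with the positivity $K^2 > 0$, which invokes the finite-root-system hypothesis $1/a + 1/b + 1/c > 1$.
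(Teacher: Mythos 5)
Your argument is correct, and all the key computations check out: the unitriangular change of basis over $\mathbb{Z}$, the identity $K\cdot\alpha_r = -c(ac-a-c)+c(ac-a-c)=0$, the identification $K^{\perp}=\mathbb{Z}\langle\alpha_1,\dots,\alpha_{a+r-2}\rangle$ via $K\cdot E_r\neq 0$, and the Gram matrix computation showing $\alpha_i\cdot\alpha_i=-2$ with the incidence pattern of the $T$-shaped tree branching at $\alpha_c$ (arms of $c-1$, $b-1$ and $a-1$ vertices off the trivalent node, totalling $a+b+c-2$). Note, however, that the paper offers no proof to compare against: it imports this statement verbatim as Lemma 2.1 of Castravet--Tevelev \cite{CT}, so your write-up is a self-contained verification of a cited result rather than an alternative to an argument in the text. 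Two small points worth making explicit if you keep this as a full proof: first, that the finiteness hypothesis $1/a+1/b+1/c>1$ (equivalently $\delta>0$, hence $K^2>0$) is what forces the form to have signature $(1,a+r-2)$ with $K$ spanning the positive direction, so that the negated form is positive definite on $K^{\perp}$ and the $\alpha_i$ genuinely constitute a simple system of a \emph{finite} root system rather than merely having the right Cartan matrix; and second, that $ac-a-c>0$ under the paper's standing conventions ($a\le c$, $a,c\ge 2$, and $c>2$ when $a=2$), which you need for $K\cdot E_r\neq 0$.
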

As a result, there is an action of the Weyl group of the root system which extends to ${\rm Pic}(X)$ by fixing the canonical divisor. Moreover the above product is invariant under this action. The action allows us to define a collection of distinguished classes analogous to the $(-1)$-curves on del Pezzo surfaces.
\begin{definition} We say that a class $V\in \Pic(X_{a,b,c})$ is a $(-1)$-divisor if it belongs to the orbit of $E_r$ under the action of the Weyl group. 
\end{definition}
By results of Dolgachev~\cite{Dolga} each of these classes is exceptional in some small modification of $X_{a,b,c}$ and in particular every such class determines a distinguished divisor. As in the case of del Pezzo surfaces it is sometimes possible to set up a correspondence between the $(-1)$-divisors and the weights of an irreducible representation of the semisimple lie algebra with Dynkin diagram $T_{a,b,c}$.  More precisely, 
\begin{definition} Let $\mathfrak{g}_{a,b,c}$ be a semisimple Lie algebra with Dynkin diagram $T_{a,b,c}$. Let $\Lambda \subseteq K^{\perp}\otimes \mathbb{Q}$ be the weight lattice spanned by the fundamental weights $\omega_1,\dots, \omega_{a+r-2}$ defined by $\omega_i\cdot \alpha_j=\delta_{ij}$. For each $\omega\in \Lambda$ let $L_{\omega}$ be the irreducible representation with highest weight $\omega$. The representation $L_{\omega}$ is called minuscule if its weights are precisely the elements of the orbit of $\omega$ under the Weyl group action.
\end{definition}
Since $E_r\cdot \alpha_j = \delta_{j,r-1}$ the orthogonal projection of $E_r$ onto $K^{\perp}$ is $\omega_{r-1}$ and thus the orthogonal projection determines a natural bijection between the $(-1)$-divisors and the weights of an irreducible representation precisely when $L_{\omega_{r-1}}$ is a minuscule representation. The classification of minuscule representations, or equivalently, minuscule Dynkin diagrams $T_{a,b,c}$ is well known (see Figure~\ref{DD}). The only arising cases are:
\[
\begin{array}{ll}
X_{s+1,1,n+1} & A_{s+r-1}\\
X_{2,2,n+1} & D_r\\
X_{2,3,3} & E_6\\
X_{2,4,3} & E_7\\
\end{array}
\]
Where the last two rows correspond to Del Pezzo surfaces of degrees three and two respectively. 

\begin{definition} We define the multigraph of exceptional divisors $G_{a,b,c}$ to be the multigraph whose vertices are the $(-1)$ divisors and with weight $M_{a,b,c}(U,V)=U\cdot V$
for distinct vertices $U$ and $V$. The maximum cut problem for the varieties $X_{a,b,c}$ asks for determining the maximum cut of the multigraphs $G_{a,b,c}$. Equivalently: Among all partitions of the classes of $(-1)$-divisors on $X_{a,b,c}$ into two sets, what is the largest possible number of pairwise intersections? 
\end{definition}
We address the maximum cut problem for minuscule varieties $X_{a,b,c}$ by showing that the normalized orthogonal projection  $f: V(G_{a,b,c})\rightarrow K^{\perp}$ is optimal for the Goemans-Williamson semidefinite relaxation of maxcut. The above bijections between $(-1)$-divisors and weights of minuscule representations will be a key ingredient of the proof.

\section{The Goemans-Williamson algorithm on symmetric multigraphs.}\label{GWsymmetry}

In this section we study the behavior of the Goemans-Williamson algorithm on graphs with symmetries. The main idea is that under a sufficiently transitive group action, optimal solutions of the optimization problems under consideration can always be chosen to be invariant, allowing us to reduce the computation of $\alpha_G$ to the question of determining the smallest eigenvalue of $M(G)$. 

This point of view allows us to study the behavior of the Goemans-Williamson algorithm on some classes of strongly regular graphs and to show that, in marked contrast with worst case performance results the performance of the GW algorithm becomes optimal as the number of vertices increases (See Section~ref{srmgs} for details). This result is especially interesting in the light of recent results~\cite{UGC} showing that the existence of an MaxCut polynomial time algorithm with an approximation ratio greater than $\alpha$ would imply the falsehood of the unique games conjecture. Classes of graphs where the performance of the GW algorithm is provably greater than $\alpha$ are thus one natural place to look for counterexamples. Moreover, our results imply that the semidefinite relaxation is an asymptotically accurate formula for the value of the maximum cut problem on such graphs. Finally, in this section we also introduce the concept of strongly regular multigraph which generalizes the idea of strongly regular graph and characterize their spectra. The class of strongly regular multigraphs is the natural context to study the graphs of $(-1)$-divisors $G_{a,b,c}$. 

\begin{lemma} \label{symAlpha} Let $G$ be a transitive multigraph and let $\lambda_1(G)$ be the smallest element of the spectrum of $M$. The following statements hold:
\begin{enumerate}
\item{\label{symmetric} $SD^{*}(G) = \frac{1}{2}\sum_{i<j} M(i,j) -\frac{|V|\lambda_1(G)}{4}$.}
\item{If $G$ is a doubly transitive graph then 
\begin{enumerate}
\item{ \label{opt2Transitive}There exists an embedding $f$ such that the angle between every two adjacent vertices is a constant $\eta$ satisfying $\cos(\eta) =\frac{\lambda_1(G)}{d}$ where $d$ is the degree of $G$.}
\item{ \label{ratio2Transitive} The performance ratio for these graphs equals
\[\alpha_G:=\frac{2}{\pi} \frac{\arccos\left(\frac{\lambda_1}{d}\right)}{1-\frac{\lambda_1}{d}}\]
 }
\end{enumerate}
}
\end{enumerate}
\end{lemma}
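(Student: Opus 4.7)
The plan is to prove the three parts sequentially: part~(1) by a symmetrization of the dual SDP, part~(2)(a) by exhibiting an explicit primal optimum and reading the embedding off its Cholesky factorization, and part~(2)(b) by a direct computation combining (1) and (2)(a).

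For part~(1) I would use the action of $\mathrm{Aut}(G)$ on the dual feasible set $F=\{\gamma:M+\mathrm{diag}(\gamma)\succeq 0\}$ defined by $(\sigma\cdot\gamma)_i=\gamma_{\sigma^{-1}(i)}$. Since conjugation by the permutation matrix $P_\sigma$ preserves positive semidefiniteness and fixes $M$, the set $F$ is invariant; the linear objective $\tfrac12\sum_{i<j}M_{ij}+\tfrac14\sum_i\gamma_i$ is also invariant. Averaging any feasible $\gamma$ over $\mathrm{Aut}(G)$ therefore yields an invariant feasible $\bar\gamma$ with the same objective, and transitivity forces $\bar\gamma=c\mathbf{1}$ with $c=\tfrac{1}{|V|}\sum_i\gamma_i$. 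Thus the infimum of $SD^*$ is attained on scalar vectors $c\mathbf{1}$, for which the PSD constraint reduces to $c\geq -\lambda_1(G)$; substituting the optimal $c=-\lambda_1(G)$ gives the stated formula. (Note that $\mathrm{tr}(M)=0$ forces $\lambda_1(G)\leq 0$, with strict inequality whenever $M\not\equiv 0$.)

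For part~(2)(a) I would construct an explicit optimal primal solution. Let $E_1$ denote the $\lambda_1$-eigenspace of $M$ and $P_1$ its orthogonal projection. Since $E_1$ is $\mathrm{Aut}(G)$-invariant, $P_1$ commutes with the permutation representation; by transitivity its diagonal is the constant $\dim(E_1)/|V|$. Hence $X^{*}:=(|V|/\dim E_1)\,P_1$ is positive semidefinite with unit diagonal, so primal feasible, and its objective value
\[
\tfrac12\sum_{i<j} M_{ij}-\tfrac14\mathrm{tr}(MX^{*})=\tfrac12\sum_{i<j}M_{ij}-\tfrac{\lambda_1|V|}{4}
\]
coincides with $SD^{*}(G)$, certifying optimality by strong duality. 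Double transitivity means that $\mathrm{Aut}(G)$ acts transitively on ordered adjacent pairs, so the $\mathrm{Aut}(G)$-invariance of $X^{*}$ forces its edge entries $X^{*}_{ij}$ to share a single value $\kappa$. Computing in two ways,
\[
\lambda_1|V|=\mathrm{tr}(MX^{*})=2\!\!\sum_{i<j,\,i\sim j}\!\!X^{*}_{ij}=\kappa\, d|V|,
\]
yields $\kappa=\lambda_1/d$. Factoring $X^{*}=F^{T}F$ and letting $f(i)$ be the $i$-th column of $F$ gives a unit-sphere embedding with $f(i)\cdot f(j)=\lambda_1/d=\cos(\eta)$ on every edge, as claimed.

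Part~(2)(b) then follows by plugging into the rounding formula: since $f(i)\cdot f(j)=\lambda_1/d$ on each of the $d|V|/2$ edges,
\[
\mathbb{E}[W(f^{*})]=\sum_{i<j}M_{ij}\frac{\arccos(f(i)\cdot f(j))}{\pi}=\frac{d|V|}{2\pi}\arccos(\lambda_1/d),
\]
and part~(1) gives $SD(G)=(d-\lambda_1)|V|/4$, so dividing produces the stated expression for $\alpha_G$. The main delicate point I anticipate is using double transitivity to collapse \emph{all} edge entries of $X^{*}$ to a single constant rather than merely a constant within each edge-orbit; once the symmetrization framework is set up, however, this reduces to the observation that vertex-transitivity together with transitivity of each vertex stabilizer on its neighbors promotes to transitivity of the full group on ordered edges.
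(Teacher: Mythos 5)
Your proposal is correct. Part~(1) is essentially the paper's own argument: average an optimal dual point over $\mathrm{Aut}(G)$, use transitivity to force it to be a scalar vector $c\mathbf{1}$, and minimize over the feasibility condition $c\geq-\lambda_1$. For part~(2) you take a genuinely different route. The paper averages an (unspecified) optimal primal matrix $X$ over the group, uses double transitivity to conclude that the averaged matrix is constant on adjacent pairs, and then extracts $\cos(\eta)=\lambda_1/d$ by invoking strong duality to equate $SD(\overline{X})=\tfrac{e}{2}(1-\cos\eta)$ with the dual value from part~(1). You instead exhibit the explicit primal optimum $X^{*}=(|V|/\dim E_1)P_1$, where $P_1$ projects onto the bottom eigenspace, verify feasibility via transitivity (constant diagonal equal to $\dim E_1/|V|$), and certify optimality by matching the dual value through $\mathrm{tr}(MX^{*})=\lambda_1|V|$ --- so only weak duality is needed, and the absence of a duality gap comes out as a byproduct rather than an input. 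Both arguments use double transitivity identically, namely to collapse the off-diagonal entries on adjacent pairs to a single constant $\kappa$, and your trace computation $\lambda_1|V|=\kappa d|V|$ recovers the same $\kappa=\lambda_1/d$. Your version buys a concrete description of the optimal embedding (the Gram vectors live in the $\lambda_1$-eigenspace, which is consistent with what the paper later finds for the graphs $G_{a,b,c}$), at the cost of slightly more setup; the paper's version is shorter but leans on strong duality as a black box. Part~(2)(b) is the same computation in both.
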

\begin{proof} The group $H:={\rm Aut}(G)$ acts on $\mathbb{R}^{|V|}$ by permutation of its components and on $|V|\times |V|$ matrices by simultaneously permuting rows and columns. Let $\gamma=(\gamma_1,\dots, \gamma_n)$ be an optimal solution of the dual of the semidefinite relaxation and define $\overline{\gamma}:=\frac{1}{|H|}\sum_{g\in H} g\cdot\gamma$. The point $\overline{\gamma}$ is feasible since 
\[0\preceq \frac{1}{|H|}\sum_{g\in H}g\cdot(M+{\rm diag}(\gamma))=M+{\rm diag}(\overline{\gamma})\] 
and also optimal since 
\[\frac{1}{|H|}\sum_{g\in H}\left(SD^*(\gamma)\right) = SD^*\left(\frac{1}{|H|}\sum_{g\in H}g\cdot \gamma \right)=SD^*(\overline{\gamma}).\]
By transitivity of the action of $H$ the components of $\overline{\gamma}$ are identical with constant value $c$. Since $M+cI\succeq 0$ it follows that $c+\lambda_1\geq 0$. 
As a result 
\[SD^{*}(-\lambda_1(1,\dots, 1))\leq SD^{*}(c(1,\dots,1))\] 
and by optimality $c=-\lambda_1$. Evaluating the right hand side we obtain claim~(\ref{symmetric}.)
If the action of $H$ is doubly transitive let $X$ be an optimal solution for the semidefinite relaxation of maxcut and note that, as before the average $\overline{X}:=\frac{1}{|H|}\sum_{g\in H}g\cdot X$ is an optimal feasible solution. Since the action is doubly transitive $\overline{X}_{ij}$ has only two possible values depending on whether vertices $i$ and $j$ intersect and in particular computing the objective function of the semidefinite relaxation we have
\[SD(G)=SD(\overline{X})= \frac{e}{2}(1-\cos(\eta))\]
where $e$ is the number of edges of $G$. Since there is no duality gap, by part~(\ref{symmetric}.) we have
\[ \frac{e}{2}(1-\cos(\eta)) = \frac{e}{2}-\frac{|V|\lambda_1}{4}=\frac{e}{2}\left(1-\frac{\lambda_1}{d}\right)\] 
where the last equality follows from $dv=2e$ since $2$-transitive graphs are regular. This establishes claim~(\ref{opt2Transitive}.). 
Let $f$ be an embedding obtained from the Cholesky factorization of $\overline{X}$. The expected weight of a random hyperplane cut obtained from $f$ is
\[\mathbb{E}[W(f)]=\frac{1}{\pi}\sum_{i<j} M(i,j)\arccos(\overline{X}_{ij})=\frac{e\arccos(\frac{\lambda_1}{d})}{\pi}\]
and claim~(\ref{ratio2Transitive}.) follows. 
\end{proof}
\begin{remark}For nonnegative integers $m\geq t\geq q$, let $J(m,t,q)$ be a graph whose vertices are the sets $\binom{m}{t}$, two of them adjacent iff they intersect in a set of size $q$. It is easy to see that the action of the permutation group on these graphs is doubly transitive. The spectra of this graphs was computed by Knuth~\cite{Knuth} and the above result simplifies the proof of a Theorem of Karloff~\cite{Karloff} showing that, for the set $\mathcal{J}$ of all graphs $J(m,t,q)$ we have the equality $\alpha(\mathcal{J})=\alpha$. In particular, even for doubly transitive graphs the worst-case performance of the GW algorithm is equal to its theoretical lower bound.
\end{remark}
\begin{remark} If the automorphism group $H$ of a graph $G$ has rank three (i.e. if there are exactly three orbits of $H$ on $V(G)\times V(G)$, namely equal, adjacent and nonadajcent pairs) then $G$ is two-transitive and in fact $G$ is a strongly regular graph so in particular its spectrum has only three values simplifying the computation of the performance ratio (see Theorem~\ref{multigraphSpectra} for details). 
\end{remark}

\subsection{Strongly regular multigraphs}\label{srmgs}
In this section we introduce the concept of strongly regular multigraph which is the natural context for our analysis of the multigraphs $G_{a,b,c}$.\\
\begin{definition} A multigraph $(V,M)$ is a strongly regular multigraph if it satisfies the following two conditions:
\begin{enumerate}
\item{There exist real numbers $a,b$ such that $M^2_{ij} = a M_{ij} +b$ for every $i\neq j$.}
\item{There exists real numbers $c$ and $d$ such that $MJ = d J$ and $M^2_{ii}=c$ where $J$ is the all-ones matrix.}
\end{enumerate}
\end{definition}
If the entries of $M$ are integral and we think of $M(i,j)$ as the number of paths between vertices $i$ and $j$ the above conditions say that every vertex has equal degree $d$, that the number of paths of length two between two distinct vertices is an affine linear function of the number of paths between them and that the number of length two loops starting at any vertex is independent of the vertex. 

It is immediate from the definition that every strongly regular graph is a strongly regular multigraph. Moreover, the spectral properties of strongly regular multigraphs are very similar to those of strongly regular graphs (see for instance~\cite{VW}[Chapter 21]), 

\begin{theorem} \label{multigraphSpectra} The spectrum of a strongly regular multigraph $(V,M)$ has two possibilities. Either
\begin{enumerate}
\item{ Consists of exactly two values $\frac{-d}{n-1}<d$ with multiplicities $n-1$ and $1$ respectively or }
\item{ Consists of exactly three values $\eta_-<\eta_+<d$ with corresponding multiplicities $f_-,f_+,1$ given by
\[
\begin{array}{ccc}
\eta_{\mp} = \frac{a\mp \sqrt{a^2-4(c-b)}}{2} & &f_{\mp} = \pm \frac{ d + (n-1)\eta_\pm}{\eta_+-\eta_-}\\
\end{array}
\] 
}
\end{enumerate}
\end{theorem}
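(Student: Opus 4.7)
The approach is to translate the two conditions defining a strongly regular multigraph into a single polynomial identity among the matrices $I$, $J$, $M$, $M^2$, and then to exploit the standard linear algebra of an operator that preserves the splitting $\mathbb{R}^{|V|} = \langle \mathbf{1}\rangle \oplus \mathbf{1}^\perp$.

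First I would rewrite the two conditions as the single matrix equation
\[M^2 \;=\; aM + bJ + (c-b)I.\]
To verify this, note that the off-diagonal $(i,j)$-entry of the right-hand side equals $aM_{ij}+b$ by condition (1), while the diagonal $(i,i)$-entry equals $0 + b + (c-b) = c$, matching $M^2_{ii}$ by condition (2) (recall $M_{ii}=0$ for a multigraph).

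Set $n:=|V|$. Since $M$ is symmetric and $MJ = dJ$, the all-ones vector $\mathbf{1}$ is an eigenvector of $M$ with eigenvalue $d$, and its orthogonal complement $W := \mathbf{1}^\perp$ is an $(n-1)$-dimensional $M$-invariant subspace on which $J$ acts as zero. The matrix identity above therefore restricts on $W$ to
\[M^2 v \;=\; aMv + (c-b)v \quad \text{for every } v \in W,\]
so that $M|_W$ is annihilated by a quadratic polynomial and its eigenvalues lie among the two roots given by the quadratic formula, which I would then rearrange to match the stated $\eta_\pm$.

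Finally, the case split and multiplicity computation. If only one root is realized on $W$ (either because the discriminant vanishes or because the minimal polynomial of $M|_W$ has degree one), then $M|_W = \mu I_W$ for some scalar $\mu$, and the full spectrum is $\{d,\mu\}$ with multiplicities $1$ and $n-1$; the trace identity $\operatorname{tr}(M) = 0$ (from $M_{ii}=0$) forces $d + (n-1)\mu = 0$, giving $\mu = -d/(n-1)$ and establishing case (1). Otherwise both roots $\eta_\pm$ appear with positive multiplicities $f_\pm$, and these are determined by the linear system
\[f_- + f_+ \;=\; n-1, \qquad f_-\eta_- + f_+\eta_+ \;=\; -d,\]
obtained from dimension count in $W$ and from $\operatorname{tr}(M)=0$. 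Solving this $2\times 2$ system yields the stated closed form for $f_\mp$.

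The argument is essentially a parallel of the classical treatment of strongly regular graphs, and no step presents a serious obstacle; the main subtlety is only to verify the opening matrix identity carefully (making sure condition (1) is used for \emph{every} off-diagonal pair, not only adjacent ones) and to check that the dichotomy between ``$M|_W$ a scalar'' and ``$M|_W$ having both roots as eigenvalues'' is genuinely exhaustive, so that the spectrum must land in exactly one of the two listed shapes.
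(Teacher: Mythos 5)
Your proposal is correct and follows essentially the same route as the paper's own proof: rewrite the two defining conditions as the single identity $M^2 = aM + bJ + (c-b)I$, note that $\mathbf{1}$ is a $d$-eigenvector and that the remaining eigenvectors lie in $\mathbf{1}^{\perp}$ where $J$ acts as zero, deduce the quadratic satisfied by the other eigenvalues, and pin down multiplicities via the dimension count and $\operatorname{tr}(M)=0$. If anything, your treatment of the degenerate case is slightly more careful than the paper's (which only mentions the vanishing-discriminant possibility, not the case where the quadratic has two roots but only one is realized on $\mathbf{1}^{\perp}$), but the argument is the same.
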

\begin{proof} From the hypotheses we see that the adjacency matrix satisfies the equations
\[
\begin{array}{ccc}
MJ=dJ & & M^2=aM + bJ +(c-b)I\\
\end{array}\]
where $J$ is the all ones matrix. By the Perron-Frobenius theorem the vector of ones is an eigenvector with eigenvalue $d$ and multiplicity one. By symmetry of $M$ the remaining eigenvectors $v$ are orthogonal to $J$ and thus their eigenvalues $\eta$ satisfy the quadratic equation
\[\eta^2 v = a\eta v +(c-b)v\]
so either $a^2=4(b-c)$ and there are only two eigenvalues $\frac{a}{2}<d$ with multiplicities $n-1$ and $1$  satisfying $(n-1)\frac{a}{2}+d=tr(M)=0$ or the quadratic equation has two distinct roots $\eta_{-}$ and $\eta_+$ as above. Moreover, if $f_-$ and $f_+$ denote their respective multiplicities we have
$1+f_-+f_+=n$ and $f_-\eta_{-}+f_+\eta_++d={\rm tr}(M)=0$. Solving these equations we get the above result.
\end{proof}
As we will show in Theorem~\ref{main}, the minuscule graphs $G_{a,b,c}$ are strongly regular multigraphs. 
\subsection{Strongly regular graphs with spectrum bounded below}
Let $m$ be a nonnegative integer and define $\mathcal{R}(-m)$ to be the collection of transitive strongly regular graphs with smallest eigenvalue $-m$. The following Theorem shows that as the number of vertices increases the performance ratio of the Goemans-Williamson algorithm approaches optimum.

\begin{theorem}  The essential performance ratio $e(\mathcal{R}(-m))$ equals one, where
\[ e(\mathcal{R}(-m)) = \lim_{n\rightarrow \infty} \inf\{ \alpha_{G}: G\in \mathcal{R}(-m), |G|\geq n\}\]
\end{theorem}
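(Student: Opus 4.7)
The plan is to reduce the theorem, via Lemma~\ref{symAlpha}, to a combinatorial statement about strongly regular graph parameters. First I would invoke Lemma~\ref{symAlpha}\,(\ref{ratio2Transitive}) to write
\[
\alpha_G \;=\; \frac{2}{\pi}\cdot\frac{\arccos(-m/d)}{1+m/d}
\]
for every doubly transitive strongly regular $G\in\mathcal{R}(-m)$ of degree $d$. Since $\arccos$ is continuous and $\arccos(0)=\pi/2$, the right-hand side tends to $\frac{2}{\pi}\cdot\frac{\pi/2}{1}=1$ as $d\to\infty$ with $m$ fixed. Thus the theorem reduces to the purely combinatorial claim that, for each fixed $m\geq 2$, every sequence $G_n\in\mathcal{R}(-m)$ with $|V(G_n)|\to\infty$ satisfies $d(G_n)\to\infty$.

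Next I would establish this claim via the standard SRG counting identity. For a primitive SRG with parameters $(v,k,\lambda,\mu)$,
\[
v \;=\; 1+k+\frac{k(k-\lambda-1)}{\mu},\qquad \mu\geq 1,
\]
and the non-principal eigenvalues $r,s$ satisfy $rs=\mu-k$ and $r+s=\lambda-\mu$. Fixing $s=-m$ gives $k=\mu+rm$ and $\lambda=r-m+\mu$, together with $-m<r<k$. If $k$ were bounded, then $r$, $\mu\leq k$, and $\lambda$ would all be bounded, and the counting identity would force $v$ bounded. Contrapositively, $v\to\infty$ implies $k\to\infty$. The imprimitive subcases are handled by inspection: $\mu=0$ gives a disconnected disjoint union of cliques (excluded from the class of connected SRGs), while $\mu=k$ gives the complete multipartite $K_{t\times m}$ whose degree $d=(t-1)m\to\infty$ as $v=tm\to\infty$.

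Combining the two ingredients, for any $\epsilon>0$ one picks $D$ with $\frac{2}{\pi}\arccos(-m/D)/(1+m/D)\geq 1-\epsilon$ and then $n_0$ large enough that every $G\in\mathcal{R}(-m)$ with $|V(G)|\geq n_0$ has $d(G)\geq D$, yielding $\alpha_G\geq 1-\epsilon$ and hence $e(\mathcal{R}(-m))=1$. The hard part will be the combinatorial step, where the interplay of $k$, $\lambda$, and $\mu$ must be carefully controlled to rule out an infinite family of SRGs with bounded degree and $s=-m$. A more structural alternative is to invoke Neumaier's classification of SRGs with bounded smallest eigenvalue, which reduces, for large $v$, to three families (complete multipartite, Steiner, and Latin-square graphs)---each with $d\to\infty$ as $v\to\infty$---plus a finite exceptional list. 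The elementary parameter argument above has the advantage of avoiding this deep input.
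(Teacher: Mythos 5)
Your proposal is correct, and it reaches the conclusion by a genuinely different route at the one place where real input is needed. Both you and the paper reduce the theorem to Lemma~\ref{symAlpha}(2): with $\lambda_1=-m$ fixed, $\alpha_G=\frac{2}{\pi}\arccos(-m/d)/(1+m/d)\to 1$ as $d\to\infty$, so everything hinges on showing that $|V(G)|\to\infty$ forces $d\to\infty$ within $\mathcal{R}(-m)$. The paper establishes this by invoking Neumaier's classification of strongly regular graphs with smallest eigenvalue $-m$ (finitely many exceptions plus the complete multipartite, Steiner, and Latin-square families, in each of which the degree grows with $v$). You instead use the standard parameter identity $v=1+k+k(k-\lambda-1)/\mu$ with $\mu\ge 1$ --- equivalently, the diameter-two bound $v\le 1+k^2$ for a connected non-complete SRG --- to conclude that bounded degree forces bounded $v$; this is correct, self-contained, and makes no use of the value of $m$ beyond fixing $\lambda_1$ (indeed your "hard part" is not hard at all once phrased as the diameter-two bound). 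The trade-off: your argument is elementary and avoids a deep classification theorem, while the paper's citation of Neumaier gives finer structural information about which graphs actually occur (which the paper does not otherwise use here). Your handling of the imprimitive cases ($\mu=0$ excluded since a disjoint union of cliques has smallest eigenvalue $-1\ne -m$ for $m\ge 2$; $\mu=k$ giving $K_{t\times m}$ with $d=(t-1)m$) is also sound, and your final $\epsilon$--$D$--$n_0$ assembly matches the intended meaning of the essential performance ratio.
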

\begin{proof} By Neumaier's classification~\cite{Neumaier} it is known that a strongly regular graph with smallest eigenvalue $-m$ is one of either,
\begin{enumerate}
\item{A finite list of exceptional graphs $L(m)$.}
\item{A complete multipartite graph with parts of size $m$.}
\item{A Steiner triple system with blocks of size $m$.}
\item{The graph of $(m-2)$ mutually orthogonal $n\times n$ latin squares (whose vertices are the $n^2$ squares and two squares are adjacent if either lie on the same row or column or have the same symbol in two of the latin squares).}
\end{enumerate}
In any of the last three classes the degree of the graph increases as the number of vertices does. The result then follows from part $(2)$ of Lemma~\ref{symAlpha}.
\end{proof}
\section{The maximum cut problem on minuscule $X_{a,b,c}$ varieties.}\label{maxcutMain}
In this section we show that the normalized orthogonal projection of the classes of $(-1)$-divisors to $K^{\perp}$ is optimal for the semidefinite relaxation of the maximum cut problem. As a result we obtain asymptotically sharp bounds for the maximum cut problem on $X_{a,b,c}$. In Section~\ref{Simulations} we use the optimal embeddings and stochastic simulation to improve the inequalities for small values of the parameters and determine the exact value of the maximum cut in some cases. These results give another instance of ``optimality" associated to root systems of type $A$,$D$,$E$. 

Let $T_{a,b,c}$ be the Dynkin diagram of a finite root system. Let $r:=b_c$ and $\delta:=bc+ac+ab-abc$. A simple calculation shows that $K^2= (ac-a-c)\delta$ so the orthogonal projection $q:{\rm Pic}(X_{a,b,c})\rightarrow K^{\perp}$ of a $(-1)$-divisor is given by
\[ q(V) = V + \frac{1}{\delta} K \]  
as a result,

\begin{definition} The normalized orthogonal projection $f:V(G_{a,b,c})\rightarrow K^{\perp}$ is given by
\[ f(V):= \frac{1}{\sqrt{1+\frac{ac-a-c}{\delta}}}  \left(V+\frac{1}{\delta}K\right)\]
\end{definition}

\begin{lemma}\label{evaluation} For each integer $k$ let $S_k(a,b,c)$ be the number of edges of $G_{a,b,c}$ of weight $k$. The following statements hold:
\begin{enumerate}
\item{\label{SDabc} \[SD(f)= \frac{\delta}{\delta+(ac-a-c)} \sum_{k\in \mathbb{N}} S_k(a,b,c) \frac{k(k+1)}{2}.\]}
\item{\label{Eabc} The expected weight of a cut obtained from $f$ via a uniformly distributed random hyperplane is
\[ \mathbb{E}[W(f)]= \sum_{k\in \mathbb{N}} S_k(a,b,c)k\arccos\left(1-\frac{\delta}{\delta +ac-a-c}(1+k)\right).\]
}
\item{\label{SDDabc} Let $\lambda_1(a,b,c)$ be the smallest eigenvalue of $M_{a,b,c}$. Then $\gamma:=-\lambda_1(a,b,c)(1,\dots,1)$ is a feasible solution of the semidefinite dual problem and
\[SD^*(\gamma)= \frac{1}{2}\left(\sum_{k\in \mathbb{N}} kS_k(a,b,c)\right)-\frac{|V|\lambda_1(a,b,c)}{4}.\]
}
\end{enumerate}
\end{lemma}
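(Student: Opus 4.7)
The plan is to deduce all three claims from a single computation: the Euclidean inner product between the images $f(U)$ and $f(V)$ of two distinct $(-1)$-divisors $U, V$ with $U\cdot V = k$. The key reductive input is Weyl-invariance, which collapses the calculation to data at the single vertex $E_r$.

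First I would record the intersection constants that drive the rest of the proof. Since every $(-1)$-divisor lies in the Weyl orbit of $E_r$ and the Weyl group preserves the intersection pairing while fixing $K$, we have $V\cdot V = -1$ and $V\cdot K = -(ac-a-c)$ for every $(-1)$-divisor $V$. Combined with the formula $K^2 = (ac-a-c)\delta$ recorded before the lemma, a direct check shows that $q(V) = V + K/\delta$ satisfies $q(V)\cdot K = 0$ and $q(V)\cdot q(V) = -\bigl(1+(ac-a-c)/\delta\bigr)$. The minus sign is essential: the intersection pairing is negative definite on $K^{\perp}$, so the Euclidean inner product used by the GW relaxation is the negative of the restriction of the intersection pairing, and this is what makes each $f(V)$ a unit Euclidean vector under the chosen normalization.

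With this sign convention in place, the next step is to expand
\[\langle f(U),f(V)\rangle = -\frac{1}{N^2}\left(k + \frac{U\cdot K+V\cdot K}{\delta}+\frac{K^2}{\delta^2}\right)\]
with $N^2 = 1+(ac-a-c)/\delta$ and simplify. The algebra telescopes to $1-\langle f(U),f(V)\rangle = \delta(1+k)/(\delta+ac-a-c)$. Parts~(\ref{SDabc}) and~(\ref{Eabc}) now follow by substituting this identity into the definitions of $SD(f)$ and $\mathbb{E}[W(f)]$ from Section~\ref{GWAlgo}, then partitioning the sum over pairs $\{i,j\}$ according to the value $k$ of $M_{ij}$ (so that $S_k(a,b,c)$ pairs contribute weight $k$ each).

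Part~(\ref{SDDabc}) is essentially independent of the analysis above. Feasibility of $\gamma = -\lambda_1(a,b,c)(1,\ldots,1)$ is tautological since $M-\lambda_1 I\succeq 0$ by definition of $\lambda_1$, and the stated value of $SD^{*}(\gamma)$ is obtained by direct substitution into the dual formula from Section~\ref{GWAlgo} together with the elementary identity $\sum_{i<j}M_{ij} = \sum_k k\,S_k(a,b,c)$. The only conceptual subtlety anywhere in the argument is the sign convention on $K^{\perp}$; once that is handled, the Weyl-orbit reduction of $V\cdot V$ and $V\cdot K$ to constants is precisely what makes the inner product depend only on $k$ and permits the grouping by edge weight.
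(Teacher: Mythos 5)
Your proposal is correct and follows essentially the same route as the paper: both arguments hinge on the single identity $1-\langle f(U),f(V)\rangle = \frac{\delta}{\delta+(ac-a-c)}(1+U\cdot V)$ (with the Euclidean inner product on $K^{\perp}$ taken as the negative of the intersection pairing), followed by grouping the pairs by edge weight, and both dispose of part~(3) by noting feasibility from $M-\lambda_1 I\succeq 0$ and direct substitution. The only difference is that you spell out the derivation of the key identity from $V\cdot V=-1$, $V\cdot K=-(ac-a-c)$ and $K^2=(ac-a-c)\delta$, which the paper asserts without computation.
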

\begin{proof} Recall that the intersection form is negative definite in $K^{\perp}$ and thus its negative defines an inner product  $\langle, \rangle$ making $K^{\perp}$ into an euclidean space. In particular, for any two $(-1)$-divisors $U$,$V$ we have
\[1-\langle f(U),f(V)\rangle = \frac{\delta}{\delta+(ac-a-c)}(1+U\cdot V)\]
and thus
\[SD(f):= \frac{1}{2}\sum_{i<j} M_{ij} (1-\langle f(i),f(j)\rangle)= \sum_{k\in \mathbb{N}}\sum_{i<j:M_{ij}=k} k(1-\langle f(e_0),f(e_1)\rangle) \]
proving claim~(\ref{SDabc}.). (\ref{Eabc}.) By ~\cite{GW}[Theorems 2.1,2.3]  the expected weight $\mathbb{E}[W(f)]$ of a cut obtained from $f$ by a random hyperplane is
\[\sum_{i<j} \frac{\arccos\langle f(i), f(j)\rangle}{\pi}= \sum_{k\in \mathbb{N}} \sum_{i<j:M_{ij}=k} k\arccos\left(1-\frac{\delta}{\delta +ac-a-c}(1+k)\right)\]
as claimed. (\ref{SDDabc}.) Since $M_{a,b,c} -\lambda_1I\succeq 0$ the vector $\gamma$ as defined above is a feasible point of the dual problem. By a straightforward calculation the above formula gives the value of the objective function of the dual problem at $\gamma$. 
\end{proof}

The following Lemma gives a geometric interpretation of the normalized orthogonal projection $f$ as a placement of $(-1)$-divisors on the vertices of certain Coxeter matroid polytopes. Note that the edges of the multigraph are not on the boundary of the polytope.

\begin{lemma}\label{bij}  Let $T_{a,b,c}$ be a minuscule Dynkin diagram. The images under $f$ of the classes of $(-1)$ divisors in $K^{\perp}$ are precisely the vertices of the Coxeter polytopes below. Moreover these classes are in canonical bijection with the weights of the representations in the last column.
\[
\begin{array}{l|l|l|l|l}
\text{Tree} & \text{Type} & \text{Coxeter polytope} & \text{Lie Algebra} & \text{Representation}\\ 
\hline
T_{s+1,1,n+1} & A_{r+s-1} & \text{Hypersimplex $\Delta(r-1,r+s)$} &  \mathfrak{sl}_{r+s} & \bigwedge^{r-1}(V^{r+s})\\
T_{2,2,n} & D_r & \text{Demicube $\Phi(r)$} & \mathfrak{so}_{2r} & \text{Spin rep $S^+$}\\
T_{2,3,3} & E_6 & \text{Gossett polytope $2_{21}$} & \mathfrak{e}_6 & J\\
T_{2,4,3} & E_7 & \text{Hess Polytope $3_{21}$} & \mathfrak{e}_7 & W\\  
\end{array}
\]
\end{lemma}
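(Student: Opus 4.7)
The plan is to deduce both claims of the lemma simultaneously by showing that $f$ carries the $(-1)$-divisors bijectively onto the Weyl orbit of the fundamental weight $\omega_{r-1}$, and then to recognize this orbit, in each of the four minuscule types, both as the vertex set of the polytope listed in the third column of the table and as the weight diagram of the representation in the last column.

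The key reduction is that $f$ is a positive scalar multiple of the orthogonal projection $q:\Pic(X_{a,b,c})\to K^{\perp}$ given by $q(V)=V+\frac{1}{\delta}K$. Since the Weyl group $W$ acts on $\Pic(X_{a,b,c})$ by isometries fixing the canonical class $K$, the map $q$ is $W$-equivariant. The text preceding the lemma already gives $q(E_r)=\omega_{r-1}$, so $q$ sends the Weyl orbit of $E_r$, which by definition is the set of $(-1)$-divisors, into the Weyl orbit $W\cdot\omega_{r-1}$. The restriction of $q$ to the $(-1)$-divisors is injective: if $q(V_1)=q(V_2)$ then $V_1-V_2=\mu K$, and intersecting with $K$ yields $\mu K^2=(V_1-V_2)\cdot K=0$ since $V\cdot K$ is Weyl invariant; a direct check shows that $K^2=(ac-a-c)\delta$ is nonzero in each minuscule case, forcing $\mu=0$. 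The minuscule hypothesis says that the weight diagram of $L_{\omega_{r-1}}$ is precisely $W\cdot\omega_{r-1}$ with each weight appearing with multiplicity one, yielding the canonical bijection claimed in the last column.

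It remains to identify the convex hull of $W\cdot\omega_{r-1}$ with the named polytope in each case. In type $A_{r+s-1}$ the paper's basis realizes $\omega_{r-1}$, up to a common translation by a multiple of the all-ones vector, as $e_1+\cdots+e_{r-1}$ in $\mathbb{R}^{r+s}$, whose $S_{r+s}$-orbit is exactly the set of $0/1$-vectors with $r-1$ ones, i.e.\ the vertex set of the hypersimplex $\Delta(r-1,r+s)$ and the weight diagram of $\bigwedge^{r-1}V^{r+s}$. In type $D_r$, $\omega_{r-1}$ is one of the two half-spin fundamental weights, and its $W(D_r)$-orbit consists of the $2^{r-1}$ vectors with coordinates $\pm\tfrac12$ having a prescribed parity of minus signs, which is the vertex set of the demicube $\Phi(r)$ and the weight diagram of the half-spin representation $S^+$. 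In types $E_6$ and $E_7$, $\omega_{r-1}$ is the highest weight of the unique minuscule fundamental representation of dimension $27$ and $56$ respectively, whose weight diagrams are classically realized as the vertex sets of the Gosset polytope $2_{21}$ and the Hess polytope $3_{21}$.

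The main obstacle is a bookkeeping one: aligning the paper's labelling of the simple roots $\alpha_1,\ldots,\alpha_{a+r-2}$, with $\alpha_{r-1}=E_{r-1}-E_r$ sitting at the node where the long arm of $T_{a,b,c}$ meets the branching vertex, with the standard labellings used to describe minuscule representations of each simple Lie algebra. For the classical types this alignment is transparent in coordinates, but in the exceptional types one must express $\omega_{r-1}$ explicitly in the standard coordinates on the $E_6$ and $E_7$ weight lattices in order to match it with the minuscule weight of the $27$-dimensional or $56$-dimensional representation, and then invoke the classical identification of that minuscule orbit as the vertex set of $2_{21}$ or $3_{21}$.
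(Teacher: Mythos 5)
Your proposal is correct and takes essentially the same route as the paper: identify $f(E_r)$ (up to scale) with the fundamental weight $\omega_{r-1}$, use Weyl-equivariance to see that the image of the $(-1)$-divisors is the orbit $W\cdot\omega_{r-1}$, invoke the minuscule hypothesis for the bijection with weights, and recognize that orbit as the vertex set of the listed polytope --- the paper carries out this last identification via the ringed-node generalized permutahedron formalism of Borovik--Gelfand--White (a point with maximal stabilizer orthogonal to all simple roots except $\alpha_{r-1}$) rather than explicit coordinates, but the content is the same. One small quibble: since $q(V)=V+\frac{1}{\delta}K$ is a translation, injectivity on the $(-1)$-divisors is immediate and your detour through $V_1-V_2=\mu K$ is unnecessary (though it would be the right argument if one used the genuine orthogonal projection $V\mapsto V-\frac{V\cdot K}{K^2}K$, since all $(-1)$-divisors have the same intersection with $K$).
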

\begin{proof} Given a root system $R\subseteq V$ and a point $p$ not belonging to all hyperplanes orthogonal to the roots in $R$ the generalized permutahedron defined by $p$ is the convex hull of the orbit of p under the action of the Weyl group of the root system (see~\cite{BGW} for details). The stabilizer of $p$ under the Weyl group is completely determined by its dot product with the roots in a simple system (see ~\cite{BGW}[Lemma 6.2.1]). Any two points with maximal stabilizers which are orthogonal to the same set of simple roots must differ only by length and thus must lead to isomorphic generalized permutahedra. The above bijections follow from this observation.  Using the notation from Section~\ref{geometry} the class $f(E_r)$ is orthogonal to all roots $\alpha_i$ for $i\neq r-1$ and thus the convex hull of its orbit is the generalized permutahedron whose Coxeter diagram has a ring around $\alpha_{r-1}$ as in Figure~\ref{DD}. These polytopes are well known and correspond to those in the table.
Similarly, let $\omega_{r-1}$ be the irreducible representation of $\mathfrak{g}_{a,b,c}$ with weight dual to $\alpha_{r-1}$. In the cases above this representation is minuscule and thus its weights are a single orbit under the action of the Weyl group establishing the desired bijection. The representations in the last three rows of the above table are the half-spin representation $S^{+}$ of $\mathfrak{so}_{2r}$ (see~\cite{FH}[Lecture 20]), the $27$-dimensional representation $J$ of the Lie algebra $\mathfrak{e}_6$ corresponding to infinitesimal norm similarities of the exceptional Jordan algebra and $W$ is the fundamental $56$-dimensional representation of the Lie algebra $\mathfrak{e}_7$ (see~\cite{LRT}[Chapter 6] for descriptions).
\end{proof}
The main result of this article is the following,
\begin{theorem}\label{main} Let $T_{a,b,c}$ be a minuscule Dynkin diagram,
\begin{enumerate} 
\item{ \label{spectrumTabc} The multigraph $G_{a,b,c}$ is a strongly regular multigraph. Its spectrum has three values $1+\lambda < 1 < 1+\eta$ where
\[ 
\begin{array}{l|l|l|l}
Graph & Type & \lambda & \eta\\
\hline
G_{s+1,1,n+1} & A_{r+s} & -\binom{r+s-2}{r-2} & s^2\binom{r+s-2}{r-3} -s\binom{r+s-2}{r-2} +\binom{r+s-2}{r-1} \\
G_{2,2,n} & D_r & -2^{r-3} & (r-4)2^{r-3}\\
G_{2,3,3} & E_6 & -6 & 9\\
G_{2,4,3} & E_7 & -12 & 28\\  
\end{array}
\]
}
\item{ \label{optimalityTabc} The normalized orthogonal projection $f:V(G_{a,b,c})\rightarrow K^{\perp}$ is optimal for the geometric relaxation of the maximum cut problem.}
\item{ \label{boundsTabc} Let $m(a,b,c)$ denote the value of the maximum cut problem for $X_{a,b,c}$.  We have $\lceil\ell(a,b,c)\rceil \leq m(a,b,c)\leq \lfloor u(a,b,c)\rfloor$ where 
\begin{tiny}
\[ 
\begin{array}{l|l|l}
Graph & \ell(a,b,c) & u(a,b,c)\\
\hline
G_{s+1,1,n+1} & \frac{1}{2\pi}\binom{r+s}{r-1}\sum_{k=0}^s \binom{s+1}{k+1}\binom{r-1}{k+1}k\arccos\left( 1-\frac{(r+s)(k+1)}{(s+1)(r-1)}\right) & \frac{r+s}{2(s+1)(r-1)} \binom{r+s}{r-1}\binom{s+1}{2}\binom{r+s-2}{r-3}\\
G_{2,2,n} & \frac{2^{r-2}}{\pi}\sum_{k=0}^{\lfloor\frac{r}{2}\rfloor}k \binom{r}{2(k+1)}k\arccos\left(1-\frac{4(k+1)}{r}\right)  & (r-3)2^{2r-6}\\
G_{2,3,3} & 90 & 101.25\\
G_{2,4,3} & 516 & 560\\  
\end{array}
\]
\end{tiny}
Moreover $\lim_{|V|\rightarrow \infty} \frac{m(a,b,c)}{u(a,b,c)}=1$ so the percentage error of approximating the maxcut by its upper bound is asymptotically zero on the infinite families.
}
\item{The upper bound agrees with the maximum cut for $G_{2,2,n}$ with $n\leq 8$ and for $G_{s+1,1,n+1}$ with $r-s=2$, $r\leq 7$.}
\end{enumerate}
\end{theorem}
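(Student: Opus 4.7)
The plan is to reduce everything to the spectral data of the adjacency matrix $M_{a,b,c}$ and to duality for the semidefinite relaxation. For part (\ref{spectrumTabc}), I would first establish that $G_{a,b,c}$ is a strongly regular multigraph. By Lemma \ref{bij} the $(-1)$-divisors form a single Weyl orbit and are in canonical bijection with the weights of a minuscule representation; a standard feature of minuscule representations is that the Weyl group acts transitively not only on weights but also on pairs of weights with prescribed inner product. Vertex transitivity gives $MJ = dJ$ with constant row sum; transitivity on pairs of fixed intersection forces $M^2_{ij}$ to depend only on $M_{ij}$, yielding an affine relation $M^2_{ij} = aM_{ij} + b$ for $i \neq j$, and $M^2_{ii}$ constant. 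Theorem \ref{multigraphSpectra} then produces the three-valued spectrum, and the specific values of $\lambda, \eta$ in the table are obtained by computing $\mathrm{tr}(M)$, $\mathrm{tr}(M^2)$ and the multiplicities via the explicit combinatorial models from Lemma \ref{bij}: pairs of $(r-1)$-subsets of $[r+s]$ for type $A$, pairs of even-parity $\pm\frac{1}{2}$-vectors in $\mathbb{R}^r$ for type $D$, and finite checks for $E_6, E_7$.

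For part (\ref{optimalityTabc}), the embedding $f$ is optimal for the geometric relaxation if and only if, by strong duality,
\[
SD(f) \;=\; SD^{*}\bigl(-\lambda_1(a,b,c)(1,\dots,1)\bigr).
\]
Both sides are explicit closed forms via Lemma \ref{evaluation}(\ref{SDabc}) and Lemma \ref{evaluation}(\ref{SDDabc}), so optimality reduces to verifying the combinatorial identity
\[
\frac{\delta}{\delta+ac-a-c}\sum_{k} S_k(a,b,c)\,\frac{k(k+1)}{2} \;=\; \frac{1}{2}\sum_{k} kS_k(a,b,c) \;-\; \frac{|V|\lambda_1(a,b,c)}{4}
\]
in each minuscule case. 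A clean route is to rewrite the right-hand side as $\frac{1}{4}\mathrm{tr}(M) + \frac{1}{4}\mathrm{tr}(M^2)/d$-type data using part (\ref{spectrumTabc}), since $\sum_k k^2 S_k = \mathrm{tr}(M^2) - \mathrm{tr}(M^2_{ii})\cdot |V|$ is already pinned down by the three eigenvalues and multiplicities of Theorem \ref{multigraphSpectra}. I expect this combinatorial identity to be the main technical obstacle, particularly in type $A$, where $S_k$ is a product of binomials $\binom{s+1}{k+1}\binom{r-1}{k+1}$ and the identity requires a Chu--Vandermonde-style collapse; the minuscule bijection of Lemma \ref{bij} is precisely what makes these sums closed-form.

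Once (\ref{spectrumTabc}) and (\ref{optimalityTabc}) are in hand, part (\ref{boundsTabc}) follows almost mechanically. The upper bound $u(a,b,c) = SD(f) = SD(G_{a,b,c})$ is immediate from $m(a,b,c) \le SD(G_{a,b,c})$, and the explicit closed forms come from substituting the combinatorial values of $S_k$ into Lemma \ref{evaluation}(\ref{SDabc}); the lower bound $\ell(a,b,c) = \mathbb{E}[W(f)]$ is a valid cut weight on average and so bounds $m(a,b,c)$ below, with closed form from Lemma \ref{evaluation}(\ref{Eabc}). For the asymptotic statement it suffices to show $\ell(a,b,c)/u(a,b,c) \to 1$. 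Setting $t_k = \frac{\delta(1+k)}{\delta+ac-a-c}$, the ratio is a convex combination of $\frac{2\arccos(1-t)}{\pi t}$ evaluated along the edge-weight distribution; in each infinite family (types $A$ and $D$) a direct estimate on $S_k(a,b,c)$ shows that as $|V|\to\infty$ the mass of this distribution concentrates on $k$ with $t_k \to 0$, where the function tends to $1$. Finally, for part (4) I would combine the exact value of $u(a,b,c)$ given by (\ref{boundsTabc}) with a computer search: run the randomized rounding step of Goemans--Williamson against the explicit embedding $f$ from Lemma \ref{bij} and exhibit an integral cut attaining $\lfloor u(a,b,c)\rfloor$ in each listed case, which by the upper bound forces equality $m(a,b,c) = \lfloor u(a,b,c)\rfloor$.
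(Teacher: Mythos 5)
Your overall architecture matches the paper's: prove the strongly regular multigraph property and the three-valued spectrum, verify $SD(f)=SD^*(-\lambda_1\mathbf{1})$ to get optimality by weak duality, read off the bounds from Lemma~\ref{evaluation}, and settle part (4) by simulation against the explicit embedding. Parts (2) and (4) of your plan are essentially the paper's proof. But there are two genuine gaps.

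First, in part (\ref{spectrumTabc}) you argue that transitivity of the Weyl group on pairs of weights with prescribed inner product forces $M^2_{ij}$ to depend only on $M_{ij}$, ``yielding an affine relation $M^2_{ij}=aM_{ij}+b$.'' The transitivity claim is fine, but it only shows that $M^2_{ij}$ is \emph{some} function of $M_{ij}$; it does not make that function affine. This matters because the off-diagonal weights take more than two values in general (e.g.\ $0,1,\dots,s$ in type $A$), so $M$ lives in a Bose--Mesner algebra of dimension $s+2$ and could a priori have up to $s+2$ distinct eigenvalues; your trace equations then do not determine the spectrum. The affineness --- equivalently, that $M^2$ lies in the span of $I$, $J$, $M$ --- is exactly the nontrivial content of the paper's Lemmas~\ref{spectrumArps} and~\ref{spectrumDr}, which verify the quadratic matrix equation for $B=M-I$ by explicit combinatorial identities on a single row (using transitivity to reduce to one row). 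You need either that computation or a replacement for it (for instance, observing that the Gram matrix of the projected orbit in the irreducible reflection representation $K^{\perp}$ is a scalar multiple of a projection by Schur's lemma, which gives the quadratic relation conceptually); as written, the step is unjustified.

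Second, your asymptotic argument in part (\ref{boundsTabc}) rests on an incorrect limit: as $t\to 0^+$ the ratio $\frac{2\arccos(1-t)}{\pi t}\sim \frac{2\sqrt{2}}{\pi\sqrt{t}}$ diverges to $+\infty$; it equals $1$ at $t=1$ (i.e.\ $\theta=\pi/2$), not at $t=0$. Moreover the edge-weight mass does not concentrate at small $t_k$: in type $D$, $S_k\propto\binom{r}{2(k+1)}$ peaks at $2(k+1)\approx r/2$, i.e.\ $t_k=\frac{4(k+1)}{r}\approx 1$, and similarly in type $A$ the hypergeometric weights peak where $t_k\approx 1$. The correct mechanism --- used in the paper's Lemmas~\ref{limitsArps} and~\ref{limitsDr} --- is to split the sum at the crossover index $k^*$ where $t_{k^*}=1$, use $\frac{2}{\pi}\arccos(1-t)\gtrless t$ on the two sides, and show the resulting discrepancy is of lower order than $SD(f)$. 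Your plan as stated would not go through.
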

The proof of the above Theorem appears in Section~\ref{proofofMainTheorem} after the different types have been analyzed independently in the next three sections. A key element is provided by the following purely combinatorial interpretations of the multigraphs $G_{a,b,c}$ for infinite minuscule families. 

\begin{definition} Let $C_{s+1,1,n+1}$ be the multigraph whose vertices are the $r-1$-subsets of $[s+r]$ with weight $M(T,S):=|T^{c}\cap S|-1$ between any two distinct vertices $S$ and $T$. Let $C_{2,2,n}$ be the multigraph whose vertices are the even subsets of $[r]$ with weight $M(T,S):=\frac{|T\ast S|}{2}-1$ between any two distinct vertices $S$ and $T$. Here $S\ast T:=(S\cup T)\setminus (S\cap T)$ is the symmetric difference between $S$ and $T$.\end{definition}

\begin{lemma} The bijection between exceptional divisors and weights of fundamental representations gives multigraph isomorphisms $G_{s+1,1,n+1}\cong C_{s+1,1,n+1}$ and $G_{2,2,n}\cong C_{2,2,n}$.
\end{lemma}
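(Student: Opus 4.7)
The plan is to verify both isomorphisms by transferring the intersection pairing on $(-1)$-divisors to the standard Euclidean pairing on the weight lattice via the bijection of Lemma \ref{bij}, and then reading off the combinatorial formula for $M(T,S)$.

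The first step is a universal identity. For every $(-1)$-divisor $U$, Weyl-invariance of the intersection form combined with the explicit formula for $K$ gives $U\cdot K=E_r\cdot K=-(ac-a-c)$. Together with $K^2=(ac-a-c)\delta$ and the decomposition $U=q(U)-\frac{1}{\delta}K$, a direct expansion yields
\[
U\cdot V \;=\; q(U)\cdot q(V) \;+\; \frac{ac-a-c}{\delta}
\]
for every pair of $(-1)$-divisors $U,V$. Since each simple root satisfies $\alpha_i\cdot\alpha_i=-2$, the positive definite form $-\,(\cdot,\cdot)|_{K^{\perp}}$ coincides with the standard $W$-invariant inner product on the root space in which simple roots have squared length $2$. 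Under this identification, Lemma \ref{bij} sends the orbit of $q(E_r)$ to the Weyl orbit of the highest weight $\omega_{r-1}$ of the minuscule representation, reducing the computation of $U\cdot V$ to an inner product of two explicit weights.

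\emph{Type $A_{r+s-1}$ (graph $G_{s+1,1,n+1}$).} The weights of $\bigwedge^{r-1}(V^{r+s})$ are $\lambda_S=\sum_{i\in S}e_i-\frac{r-1}{r+s}\sum_{i\in[r+s]}e_i$, indexed by $(r-1)$-subsets $S\subseteq[r+s]$, and a direct expansion gives $\langle \lambda_S,\lambda_T\rangle=|S\cap T|-\frac{(r-1)^2}{r+s}$. Plugging in $\delta=r+s$, $ac-a-c=sr-2s-1$, and $|T^c\cap S|=(r-1)-|S\cap T|$ (since $|S|=r-1$), the universal identity collapses to $U\cdot V=(r-2)-|S\cap T|=|T^c\cap S|-1$, matching the weight on $C_{s+1,1,n+1}$.

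\emph{Type $D_r$ (graph $G_{2,2,n}$).} The weights of the half-spin representation $S^+$ are $w_S=\frac{1}{2}\sum_{i=1}^{r}\epsilon^S_i e_i$, where $\epsilon^S_i=-1$ if $i\in S$ and $+1$ otherwise, indexed by the even subsets $S\subseteq[r]$. Since $\langle w_S,w_T\rangle=\frac{1}{4}\sum_i\epsilon^S_i\epsilon^T_i=\frac{r-2|S\ast T|}{4}$, substituting $\delta=4$ and $ac-a-c=r-4$ immediately gives $U\cdot V=\frac{|S\ast T|}{2}-1$, which is the weight on $C_{2,2,n}$.

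\emph{Main obstacle.} The only subtle point is the dictionary between the two inner products: the intersection form on $K^{\perp}$ is negative definite and must be flipped, and the identification of $q(E_r)$ with $\omega_{r-1}$ is \emph{a priori} defined only up to an overall sign, which is immaterial for pairings. Once this dictionary is anchored by checking the norm $q(E_r)^2=-\frac{(r-1)(s+1)}{r+s}$ in type $A$ and $q(E_r)^2=-\frac{r}{4}$ in type $D$, the proof reduces to the two short inner-product computations above.
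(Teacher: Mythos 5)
Your proof is correct and follows essentially the same route as the paper: both arguments transfer the intersection pairing to the weight space via the isometry underlying Lemma~\ref{bij} and then compute inner products of the explicit weights of $\bigwedge^{r-1}(V^{r+s})$ and $S^{+}$. The only difference is bookkeeping — you work with the unnormalized projection $q$ and the identity $U\cdot V=q(U)\cdot q(V)+\tfrac{ac-a-c}{\delta}$, while the paper uses the normalized embedding $f$ and unit weight vectors — and your norm checks $q(E_r)^2=-\tfrac{(r-1)(s+1)}{r+s}$, $-\tfrac{r}{4}$ correctly anchor the dictionary.
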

\begin{proof} We look at the bijections from Lemma~\ref{bij} in more detail. We use them to obtain combinatorial interpretations of the product $U\cdot V$ of divisors.
Let $\mathfrak{h}\subseteq \mathfrak{sl}_{r+s}$ be the Cartan subalgebra of traceless matrices and recall that the quotient $\mathfrak{h}^*:=\langle L_1,\dots, L_{r+s}\rangle /(\sum_{i=1}^{r+s} L_i)$ can be split realizing $\mathfrak{h}^{*}$ as the subspace of $\langle L_1,\dots L_{r+s}\rangle$ spanned by the simple roots $\beta_i:=L_i-L_{i+1}$ for $1\leq i\leq r+s-1$. The weights of the fundamental representation $\bigwedge^{r-1}(V^{r+s})$ are precisely $v_J:=\bigwedge_{i\in J} e_i$ for $J\in \binom{r+s}{r-1}$ and the highest weight vector is $v_{1,\dots, r-1}$ dual to $\beta_{r-1}$. Using the splitting, the normalized weight of the vector $v_J$ is $\omega_J\in \mathfrak{h}^{*}$ given by
\[ \omega_J:=\frac{1}{\sqrt{(r+s)(s+1)(r-1)}} \left( (s+1)\sum_{j\in J} L_j +(1-r)\sum_{j\in J^{c}} L_j\right)\] 
By a direct computation we have $\omega_J\cdot \omega_T = |J\setminus T| \frac{s+r}{(r-1)(s+1)}-1$.
On the other hand, if $V_J$ and $V_T$ are the $(-1)$-divisors corresponding to $J$ and $T$ under the bijection in Lemma~\ref{bij} then
\[ f(V_J)\cdot f(V_T) = \frac{r+s}{(s+1)(r-1)}\left( V_J\cdot V_T -\left(\frac{(s+1)(r-1)}{r+s}-1\right)\right)\]
Since the bijection is an isometry it follows that $V_J\cdot V_T= |J\setminus T|-1$. As a result $G_{s+1,1,n+1}\cong C_{s+1,1,n+1}$.
For the $G_{2,2,n}$ recall (~\cite{FH}[Lecture 18]) that the even orthogonal Lie algebra $\mathfrak{so}_{2r}$ can be realized as $2r\times 2r$ matrices with four $r\times r$ blocks where the diagonal blocks are negative transposes of each other and the off diagonal blocks are skew-symmetric. The Cartan subalgebra $\mathfrak{h}\subseteq \mathfrak{so}_{2r}$ corresponds to the diagonal matrices satisfying these restrictions and in particular the operators $L_i\in \mathfrak{h}^*$ which extract the $i$-th diagonal entry  
for $1\leq i\leq r$ are a basis for $\mathfrak{h}^*$. A system of simple roots consists of $\gamma_i:=L_i-L_{i+1}$ for $1\leq i\leq r-1$ and $\gamma_r:=L_{r-1}+L_{r}$.
The fundamental representation with weight dual to $\gamma_{r}$ is precisely the even spin representation with highest weight $\frac{1}{2}(L_1+\dots +L_r)$ (see~\cite{FH}[Lecture 20] for a precise description). Suffices to say (\cite{FH}[Proposition 20.15]) that the underlying vector space is $\bigwedge^{even}(W)$ where $W=\langle e_1,\dots, e_r\rangle$. The  
weight vectors of this representation are $e_J:=\bigwedge_{j\in J}e_j$ with normalized weights $\omega_J:=\frac{1}{\sqrt{r}}\left(\sum_{j\in J} e_j-\sum_{j\not\in J} e_j\right)$ as $J$ runs over the even subsets of $[r]$. 
A direct computation yields $\omega_J\omega_T=\frac{2}{r}|J\ast T|-1$ where $J\ast T$ denotes the symmetric difference.  On the other hand, if $V_J$ and $V_T$ ae the $(-1)$-divisors corresponding to $\omega_J$ and $\omega_T$ we have $f(V_J)\cdot f(V_T)=\frac{1}{r}\left(4V_J\cdot V_T-(r-4)\right)$. Since the bijection is an isometry it follows that $V_J\cdot V_T=\frac{|J\ast T|}{2}-1$. As a result $G_{2,2,n}\cong C_{2,2,n}$ as claimed.
\end{proof}

\subsection{Type $A$ multigraphs.}

Let $M$ be the adjacency matrix of the multigraph $C_{s+1,1,n+1}$. Recall that $r=n+2$ and define $B:=M-I$. We will show that the matrix $B$ satisfies a certain quadratic equation allowing us to conclude that the graphs $C_{s+1,1,n+1}$ are strongly regular multigraphs and to determine their spectra. To this end we will use the following elementary combinatorial identity, which holds for every two nonnegative integers $s$ and $m$,
\[\sum_{j=0}^{s} j\binom{s}{j}\binom{m}{m-j} = s\binom{m+s-1}{m-1} \]
\begin{lemma}\label{spectrumArps} The matrix $B$ satisfies the equation $B^2 = \lambda_{s,r}B + \eta_{s,r}J$ with
\[
\begin{array}{ll}
\lambda_{s,r}:=-\binom{r+s-2}{r-2} & \eta_{s,r}:= s^2\binom{r+s-2}{r-3}-s\binom{r+s-2}{r-2}+\binom{r+s-2}{r-1}\\ 
\end{array}
\]
As a result $C_{s+1,1,n+1}$ as a strongly regular multigraph and its spectrum consists of  
\[1+\lambda_{s,r}< 1< 1 + \eta_{s,r}\] 
with multiplicities $r+s-1$, $\binom{r+s}{r-1}-(r+s)$ and one respectively. 
\end{lemma}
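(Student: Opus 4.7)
The plan is to verify the matrix identity $B^2=\lambda_{s,r}B+\eta_{s,r}J$ by a direct combinatorial computation and then read off the spectrum from Theorem~\ref{multigraphSpectra}. A useful first observation is that because $M(S,S)=0$ by the multigraph convention while $|S\cap S|=r-1$, the uniform formula $B(S,T)=(r-2)-|S\cap T|$ holds for \emph{every} pair of $(r-1)$-subsets $S,T$ of $[r+s]$, not just the off-diagonal ones. This makes the subsequent computation dimension-free and lets us avoid case distinctions.

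Next I would expand
\[B^2(S,T)=\sum_U \bigl((r-2)-|S\cap U|\bigr)\bigl((r-2)-|U\cap T|\bigr),\]
the sum being over the $\binom{r+s}{r-1}$ vertices $U$. The two linear pieces $\sum_U|S\cap U|$ and $\sum_U|U\cap T|$ each evaluate to $(r-1)\binom{r+s-1}{r-2}$ by double counting. For the bilinear piece I would factor $\sum_U |S\cap U|\,|U\cap T|=\sum_{i\in S,\,j\in T}|\{U:\{i,j\}\subseteq U\}|$ and split according to whether $i=j$ (contribution $|S\cap T|\binom{r+s-1}{r-2}$) or $i\neq j$ (contribution $((r-1)^2-|S\cap T|)\binom{r+s-2}{r-3}$). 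One application of Pascal's rule then shows that the coefficient of $|S\cap T|$ in $B^2(S,T)$ equals $\binom{r+s-2}{r-2}$, and matching this against $\lambda_{s,r}B(S,T)+\eta_{s,r}=(\lambda_{s,r}(r-2)+\eta_{s,r})-\lambda_{s,r}|S\cap T|$ forces $\lambda_{s,r}=-\binom{r+s-2}{r-2}$ as claimed and pins down $\eta_{s,r}$ as the residual constant.

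The main obstacle is showing that this residual constant simplifies to the stated closed form $s^2\binom{r+s-2}{r-3}-s\binom{r+s-2}{r-2}+\binom{r+s-2}{r-1}$. My approach would be to reduce every binomial with top entry $r+s$ or $r+s-1$ to binomials with top entry $r+s-2$ via iterated Pascal identities, and then apply the combinatorial identity $\sum_{j=0}^s j\binom{s}{j}\binom{m}{m-j}=s\binom{m+s-1}{m-1}$ recalled immediately before the lemma to collapse the leftover telescoping sum. This step is elementary but is where essentially all of the bookkeeping lives.

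Once $B^2=\lambda_{s,r}B+\eta_{s,r}J$ is established, the rest is formal. Using $M=B+I$ one obtains $M^2=(\lambda_{s,r}+2)M+\eta_{s,r}J-(\lambda_{s,r}+1)I$, which identifies $G_{s+1,1,n+1}\cong C_{s+1,1,n+1}$ as a strongly regular multigraph in the sense of Section~\ref{srmgs}. Restricting $B^2=\lambda_{s,r}B+\eta_{s,r}J$ to $\mathbf{1}^\perp$ kills $J$ and forces $B$ to have eigenvalues in $\{0,\lambda_{s,r}\}$ there, giving eigenvalues $1$ and $1+\lambda_{s,r}$ for $M$; the Perron eigenvector $\mathbf{1}$ supplies the third eigenvalue with multiplicity one, and the multiplicities $r+s-1$ and $\binom{r+s}{r-1}-(r+s)$ are then forced uniquely by the trace relation $\mathrm{tr}(M)=0$ together with the total count $\binom{r+s}{r-1}$, invoking Theorem~\ref{multigraphSpectra}.
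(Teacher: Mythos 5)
Your verification of the quadratic equation is essentially the paper's argument, organized a bit more cleanly: the paper also proves $B^2_{[r-1],T}=\lambda_{s,r}B_{[r-1],T}+\eta_{s,r}$ entrywise (using transitivity to fix the first index), but it does so by stratifying the middle subset $S$ according to $|S\cap[r,\dots,r+s]|$ and then invoking the identity $\sum_j j\binom{s}{j}\binom{m}{m-j}=s\binom{m+s-1}{m-1}$, whereas your expansion of $\bigl((r-2)-|S\cap U|\bigr)\bigl((r-2)-|U\cap T|\bigr)$ plus the double count of $\sum_U|S\cap U|\,|U\cap T|$ isolates the coefficient of $|S\cap T|$ in one stroke and yields $\lambda_{s,r}=-\binom{r+s-2}{r-2}$ with no case analysis. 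The residual constant you defer is $(r-2)^2\binom{r+s}{r-1}-2(r-2)(r-1)\binom{r+s-1}{r-2}+(r-1)^2\binom{r+s-2}{r-3}+(r-2)\binom{r+s-2}{r-2}$, and it does reduce to the stated $\eta_{s,r}$; that bookkeeping should be written out, but it is routine and your plan for it is sound.

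The substantive issue is in your last paragraph. You correctly obtain the eigenvalues $1+\lambda_{s,r}$ and $1$ on $\mathbf{1}^{\perp}$ and the multiplicities $r+s-1$ and $\binom{r+s}{r-1}-(r+s)$ from the trace, but you never identify the Perron eigenvalue --- and the statement asserts it equals $1+\eta_{s,r}$, so as written your proof does not establish that clause. In fact that clause is false: the Perron eigenvalue of $B$ is its common row sum $\rho=(r-2)\binom{r+s}{r-1}-(r-1)\binom{r+s-1}{r-2}=d-1$ where $d$ is the degree, and applying $B^2=\lambda_{s,r}B+\eta_{s,r}J$ to $\mathbf{1}$ gives $\eta_{s,r}=\rho(\rho-\lambda_{s,r})/\binom{r+s}{r-1}$, which equals $\rho$ only when $\rho-\lambda_{s,r}=\binom{r+s}{r-1}$. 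Already for $r=4$, $s=1$ (the Petersen graph of the quintic del Pezzo) one has $B^2=-3B+J$, so $\lambda=-3$ and $\eta=1$, yet the top eigenvalue of $M$ is $3=1+\rho$ with $\rho=2$, not $1+\eta=2$. So the matrix equation and the two lower eigenvalues with their multiplicities are correct, but the largest eigenvalue in the statement should be $1+\rho$ (equivalently the degree $d$, as in Theorem~\ref{multigraphSpectra}) rather than $1+\eta_{s,r}$; the paper's own proof glosses over this in exactly the same place by deferring to Theorem~\ref{multigraphSpectra}. Since only the smallest eigenvalue is used downstream (in Lemma~\ref{computArps}) nothing else is affected, but your write-up should either compute $\rho$ explicitly or note the discrepancy instead of leaving the third eigenvalue unnamed.
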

\begin{proof} Since the action of the Weyl group is transitive on $C_{s+1,1,n+1}$ it suffices to show that the following equality holds for any set $T\in \binom{r+s}{r-1}$
\[ B^2_{[r-1],T}=\lambda_{s,r}B_{[r-1],T}+\eta_{s,r}\]
For $S\in \binom{r+s}{r-1}$ let $S_u:=S\cap [r+1,\dots, r+s]$, $S_d:=S\setminus S_u$ and let $s_u=|S_u|$ and $s_l:=|S_l|$. 
with this notation we have
\[B^2_{[r-1],T}=\sum_{S\in \binom{r+s}{r-1}} B_{[r-1],S}B_{S,T}= \sum_{S\in \binom{r+s}{r-1}}(s_u-1)(|S\cap T^c|-1)= \sum_{j=0}^{s+1}(j-1)\sum_{S\in \binom{r+s}{r-1}, s_u=j}|S\cap T^c|-1=\] 
\[=-\sum_{j=0}^{s+1} (j-1)|\{S:s_u=j\}| +\sum_{j=0}^{s+1}(j-1)\sum_{t\in T^c}|\{S:s_u=j, S\ni t\}|\]
the last sum can be divided into $t\in T^c\cap [r,r+s]$ and $t\in T^c\cap [r-1]$. We thus have,
\[-\sum_{j=0}^{s+1}(j-1)\binom{s+1}{j}\binom{r-1}{r-1-j}+ (s+1-t_u) \sum_{j=0}^{s+1} (j-1)\binom{s}{j-1}\binom{r-1}{r-1-j} +\] 
\[+ (r-1-t_d)\sum_{j=0}^{s+1}(j-1)\binom{s+1}{j}\binom{r-2}{r-2-j}\]
which gives the claimed equality using the combinatorial identity above and the equalities $s+1-t_u=s-B_{[r-1],T}$ and $r-1-t_d=1+B_{[r-1],T}$. Arguing as in the proof of Theorem~\ref{multigraphSpectra} we obtain, from the quadratic equation, the above description of the spectrum of $C_{s+1,1,n+1}$.
\end{proof}

The following combinatorial Lemma will be needed for the proof of our main theorem for the graphs $C_{s+1,1,n+1}$. Recall that $S_k:= S_k(s+1,1,n+2)$ is the number of edges of weight $k$ in the graph $C_{s+1,1,n+1}$

\begin{lemma} \label{combinatAr} The following equalities hold for $k\geq 0$,
\[
\begin{array}{l}
S_k = \frac{1}{2}\binom{r+s}{r-1}\binom{s+1}{k+1}\binom{r-1}{r-1-(k+1)}\\
\sum_{k=0}^s S_k = \frac{1}{2}\binom{r+s}{r-1}\left(\binom{r+s}{r-1}-1\right)\\
\sum_{k=0}^s S_k(k+1) = \frac{1}{2}\binom{r+s}{r-1} (s+1)\binom{r+s-1}{r-2}\\
\sum_{k=0}^s S_k\binom{k+1}{2}=\frac{1}{2}\binom{r+s}{r-1} \binom{s+1}{2} \binom{r+s-2}{r-3}\\
\end{array}
\]
\end{lemma}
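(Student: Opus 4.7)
The plan is to establish the four identities in turn, with the first one being the combinatorial heart of the Lemma and the other three following from it by manipulations with Vandermonde's identity.

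For the formula for $S_k$, I will count edges of weight $k$ directly from the combinatorial description of $C_{s+1,1,n+1}$. Recall that the vertices are the $(r-1)$-subsets of $[r+s]$ and the weight between $S \neq T$ equals $|T^{c} \cap S| - 1 = |S \setminus T| - 1$, so the edges of weight $k$ correspond to ordered pairs with $|S \setminus T| = k+1$. Since $|S| = |T| = r-1$, we also have $|T \setminus S| = k+1$. Therefore, once $S$ is fixed, a compatible $T$ is obtained by choosing which $k+1$ elements of $S$ to delete and which $k+1$ elements of $S^{c}$ (of size $s+1$) to insert. This yields $\binom{s+1}{k+1}\binom{r-1}{k+1}$ ordered neighbors of $S$ of weight $k$, and summing over $S$ and dividing by $2$ gives the first identity (using $\binom{r-1}{k+1}=\binom{r-1}{r-1-(k+1)}$).

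The second identity is immediate: $\sum_{k\geq 0} S_k$ counts all pairs of distinct vertices, so it equals $\binom{N}{2}$ with $N = \binom{r+s}{r-1}$. For the third identity, substitute the formula for $S_k$ and pull out the common factor, reducing the claim to
\[
\sum_{k\geq 0}(k+1)\binom{s+1}{k+1}\binom{r-1}{k+1}=(s+1)\binom{r+s-1}{r-2}.
\]
Applying the absorption identity $(k+1)\binom{s+1}{k+1}=(s+1)\binom{s}{k}$ and then Vandermonde's identity $\sum_{k}\binom{s}{k}\binom{r-1}{k+1}=\binom{r+s-1}{s+1}=\binom{r+s-1}{r-2}$ (after writing $\binom{s}{k}=\binom{s}{s-k}$) finishes the computation.

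For the fourth identity, the same substitution reduces the claim to
\[
\sum_{k\geq 0}\binom{k+1}{2}\binom{s+1}{k+1}\binom{r-1}{k+1}=\binom{s+1}{2}\binom{r+s-2}{r-3}.
\]
Here I use the trinomial-revision identity $\binom{k+1}{2}\binom{s+1}{k+1}=\binom{s+1}{2}\binom{s-1}{k-1}$ and reindex with $j=k-1$, which turns the left side into $\binom{s+1}{2}\sum_{j}\binom{s-1}{j}\binom{r-1}{j+2}$. One more application of Vandermonde (after replacing $\binom{r-1}{j+2}$ by $\binom{r-1}{r-3-j}$) gives the claimed product. None of these steps is a serious obstacle; the only mildly delicate point is the bookkeeping of which binomial coefficient to "flip" before invoking Vandermonde in identities three and four, and this is a routine matter.
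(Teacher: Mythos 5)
Your proof is correct and follows essentially the same route as the paper: both compute $S_k$ by counting, from a fixed vertex, the $(r-1)$-subsets at prescribed "distance" (the paper fixes $[r-1]$ and counts $S$ with $|S\cap[r,r+s]|=k+1$; your delete-and-insert count of neighbours of a general $S$ is the same enumeration), and both then evaluate the weighted sums by Vandermonde-type identities. The only cosmetic difference is that the paper realizes the third and fourth sums by a direct double count (choosing captains and then the rest of a team), whereas you reach the same products algebraically via absorption, trinomial revision, and Vandermonde.
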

\begin{proof} Since the multigraph $C_{s+1,1,n+1}$ is transitive, the number of edges of weight $k$ coming out of every vertex is the same. for the vertex $[r-1]$ this quantity is precisely the cardinality of the sets $S\in \binom{s+r}{r-1}$ with $|S\cap [r,r+s]|-1=k$ which equals $\binom{s+1}{k+1}\binom{r-1}{r-1-(k+1)}$. Summing over all vertices we see that 
\[2S_k = \binom{r+s}{r-1}\binom{s+1}{k+1}\binom{r-1}{r-1-(k+1)}\]
and the first formula follows. Now, using the formula for $S_k$ the last quantity becomes
\[\sum_{k=0}^s\binom{k+1}{2}S_k=\frac{1}{2}\binom{r+s}{r-1}\sum_{k=0}^s \binom{k+1}{2}\binom{s+1}{k+1}\binom{r-1}{r-1-(k+1)}.\]
The rightmost sum can be interpreted as the counting the number of ways of choosing a team of $r-1$ active players out of $r+s$ possible players by first choosing two captains from among $s+1$ distinguished eligible players and then choosing $r-3$ among the remaining $r+s-2$ players. As a result we have
\[ \sum_{k=0}^s\binom{k+1}{2}S_k = \frac{1}{2}\binom{r+s}{r-1} \binom{s+1}{2}\binom{r+s-2}{r-3}.\]
The other equalities follow from the same line of reasoning.
\end{proof}

\begin{lemma}\label{computArps} Let $f:V(G_{s+1,1,n+1})\rightarrow K^{\perp}$ be the normalized orthogonal projection and let $\lambda_1$ be the smallest eigenvalue of the adjacency matrix $M_{s+1,1,n+1}$. 
\begin{enumerate}
\item{ \[SD(f) = \frac{r+s}{2(s+1)(r-1)}\binom{r+s}{r-1}\binom{s+1}{2}\binom{r+s-2}{r-3}\] }
\item{ $M_{s+1,1,n+1}-\lambda_1I\succeq 0$ so $(-\lambda_1,\dots, -\lambda_1)$ is a feasible point for the dual problem and the value of the dual objective function at this point is
\[SD^{*}(\lambda_1) = \frac{1}{4}\binom{r+s}{r-1}\left(s\binom{r+s-1}{r-2}-\binom{r+s-2}{r-1}\right)\]}
\item{ The equality $SD(f)=SD^*(\lambda_1)$ holds. In particular $f$ is an optimal embedding of the multigraph $G_{s+1,1,n+1}$ into $K^{\perp}$.}
\item{ The expected value of a random cut obtained from this optimal embedding is
\[\mathbb{E}(W)=\frac{1}{2\pi}\binom{r+s}{r-1}\sum_{k=0}^s \binom{s+1}{k+1}\binom{r-1}{k+1}k\arccos\left(1-\frac{(r+s)(k+1)}{(s+1)(r-1)}\right)\]
}
\end{enumerate}
\end{lemma}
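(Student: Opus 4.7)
The plan is to apply the general formulas of Lemma~\ref{evaluation} to the type $A$ case, using the combinatorial identities in Lemma~\ref{combinatAr} to evaluate the resulting sums and the spectral information from Lemma~\ref{spectrumArps} to handle the dual problem. First I would specialize the key scalar factor $\delta/(\delta+ac-a-c)$ appearing in Lemma~\ref{evaluation} to the type $A$ parameters $(a,b,c)=(s+1,1,r-1)$: a direct calculation yields $\delta = r+s$ and $\delta + ac - a - c=(r-1)(s+1)$, so the factor simplifies to $(r+s)/((r-1)(s+1))$.

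Part (1) is then immediate from Lemma~\ref{evaluation}(\ref{SDabc}) combined with the fourth identity of Lemma~\ref{combinatAr}, which evaluates $\sum_k S_k\binom{k+1}{2}$ in closed form. Part (4) follows from Lemma~\ref{evaluation}(\ref{Eabc}) by substituting the explicit formula for $S_k$ together with the simplified constant, recovering the $\arccos$-sum in the statement. For part (2), feasibility of $\gamma=-\lambda_1(1,\dots,1)$ in the dual problem is immediate from the definition of the smallest eigenvalue. To evaluate $SD^*(\gamma)$, I would apply Lemma~\ref{evaluation}(\ref{SDDabc}), write $\sum_k kS_k = \sum_k (k+1)S_k - \sum_k S_k$ using the second and third identities of Lemma~\ref{combinatAr}, and substitute $\lambda_1 = 1-\binom{r+s-2}{r-2}$ from Lemma~\ref{spectrumArps}. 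Two successive applications of Pascal's identity (first expanding $\binom{r+s}{r-1}$ in terms of $\binom{r+s-1}{r-1}$ and $\binom{r+s-1}{r-2}$, then $\binom{r+s-1}{r-1}$ in terms of $\binom{r+s-2}{r-1}$ and $\binom{r+s-2}{r-2}$) collapse the expression into the claimed closed form.

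The main obstacle is part (3), which asserts the equality $SD(f)=SD^{*}(\lambda_1)$ between the formulas produced in (1) and (2). After extracting the common factor $\tfrac{1}{4}\binom{r+s}{r-1}$, this reduces to verifying the binomial identity
\[
\frac{s(r+s)}{r-1}\binom{r+s-2}{r-3} \;=\; s\binom{r+s-1}{r-2} - \binom{r+s-2}{r-1}.
\]
Clearing denominators with the common factor $(r+s-2)!/((r-1)!(s+1)!)$ reduces this to the polynomial identity $(r+s)(r-2)=(r-1)(r+s-1)-(s+1)$, which both sides expand to $r^2+rs-2r-2s$. Once this identity is established, part (3) follows, and optimality of $f$ is a consequence of the weak duality chain
\[
SD(f)\;\leq\; SD(G)\;=\;SD^{*}(G)\;\leq\; SD^{*}(\lambda_1),
\]
since equality at the extremes forces equality throughout. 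This is the step that also uses strong duality to certify optimality of both the primal embedding $f$ and the dual point $\gamma$.
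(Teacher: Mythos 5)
Your proposal is correct and follows essentially the same route as the paper: parts (1), (2) and (4) are obtained by specializing Lemma~\ref{evaluation} with the factor $\delta/(\delta+ac-a-c)=(r+s)/((s+1)(r-1))$ and plugging in the identities of Lemma~\ref{combinatAr} together with $\lambda_1=1-\binom{r+s-2}{r-2}$ from Lemma~\ref{spectrumArps}, and part (3) is reduced to the same binomial identity $(r+s)s\binom{r+s-2}{r-3}=(r-1)\bigl(s\binom{r+s-1}{r-2}-\binom{r+s-2}{r-1}\bigr)$, which you verify by clearing factorials rather than the paper's absorption of $\binom{r+s-1}{r-2}$ into $\binom{r+s-2}{r-3}$ --- an immaterial difference. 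The only nitpick is that optimality in (3) needs only weak duality (primal value $\leq$ dual value at the exhibited feasible points), not strong duality, exactly as the paper notes.
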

\begin{proof} $(1.)$ Follows from Lemma~\ref{evaluation} part $(1.)$ with  $a=s+1$,$b=1$, $c=n+2$ and $r=n-2$ and the fourth identity Lemma~\ref{combinatAr}. 
$(2.)$ From Lemma~\ref{spectrumArps} we know $\lambda_1=1-\binom{r+s-2}{r-2}$. The claim follows from Lemma~\ref{evaluation} part $(3.)$, the third identity in Lemma~\ref{combinatAr} and the equality $|V|=\binom{r+s}{r-1} $. $(3.)$ After cancelling common factors proving the equality $SD(f)=SD^*(\lambda_1)$ is equivalent to verifying
\[ (r+s)s\binom{r+s-2}{r-3} = (r-1)\left( s\binom{r+s-1}{r-2} - \binom{r+s-2}{r-1}\right).\]
Since $\binom{r+s-1}{r-2} = \frac{r+s-1}{r-2}\binom{r+s-2}{r-3}$ the above equality is equivalent to 
\[ -\frac{s(s+1)}{r-2}\binom{r+s-2}{r-3} = -(r-1)\binom{r+s-2}{r-1}\]
which is true. Optimality follows from weak duality.
$(4)$ The equality follows from Lemma~\ref{evaluation} part $(2.)$ and the first identity in Lemma~\ref{combinatAr}.
\end{proof}

\begin{lemma}\label{limitsArps} The following equality holds,
\[\lim_{r+s\rightarrow \infty}\frac{\lceil\frac{1}{2\pi}\binom{r+s}{r-1}\sum_{k=0}^s \binom{s+1}{k+1}\binom{r-1}{k+1}k\arccos\left(1-\frac{(r+s)(k+1)}{(s+1)(r-1)}\right)\rceil}{\frac{r+s}{2(s+1)(r-1)}\binom{r+s}{r-1}\binom{s+1}{2}\binom{r+s-2}{r-3}}=1\]
\end{lemma}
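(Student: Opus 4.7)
The denominator $u(s+1,1,n+1)$ tends to infinity as $r+s\to\infty$ (it contains the factor $\binom{r+s}{r-1}$), so the ceiling in the numerator becomes asymptotically negligible and it suffices to show that $R(r,s) := \mathbb{E}[W(f)]/SD(f)\to 1$.

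Setting $q := (r+s)/((s+1)(r-1))$ and $\theta_k := \arccos(1-q(k+1))$, the identity $1-\cos\theta_k = q(k+1)$ converts $R$ into
\begin{equation*}
R = \sum_{k=1}^s p_k\,g(\theta_k), \qquad g(\theta) := \frac{2}{\pi}\frac{\theta}{1-\cos\theta},
\end{equation*}
where $g(\pi/2) = 1$ and the nonnegative weights $p_k$ sum to one. The factorial identity $k(k+1)\binom{s+1}{k+1} = s(s+1)\binom{s-1}{k-1}$ together with a Vandermonde summation for the normalizer simplifies the weights to
\begin{equation*}
p_k = \frac{\binom{s-1}{k-1}\binom{r-1}{k+1}}{\binom{r+s-2}{r-3}},
\end{equation*}
which is the probability mass function of a random variable $K$ for which $K-1$ is hypergeometric with population $r+s-2$, success class of size $s-1$, and sample size $r-3$.

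Standard hypergeometric formulas give $\mathbb{E}[K+1] = (s+1)(r-1)/(r+s-2)$ and $\operatorname{Var}(K) = (s-1)(r-3)(r-1)(s+1)/((r+s-2)^2(r+s-3))$. Plugging in yields $\mathbb{E}[\cos\theta_K] = 1 - q\,\mathbb{E}[K+1] = -2/(r+s-2)\to 0$ and $\operatorname{Var}(\cos\theta_K) = q^2\operatorname{Var}(K) = O(1/(r+s))\to 0$. By Chebyshev's inequality, $\theta_K \to \pi/2$ in $p$-probability as $r+s\to\infty$.

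Given $\delta > 0$, continuity of $g$ at $\pi/2$ with $g(\pi/2) = 1$ lets me pick $\epsilon$ so that $g(\theta) \geq 1-\delta$ whenever $|\theta - \pi/2| \leq \epsilon$. Using the uniform Goemans--Williamson bound $g \geq \alpha$ on the complement,
\begin{equation*}
R \;\geq\; (1-\delta)\,P_p\bigl(|\theta_K - \pi/2|\leq\epsilon\bigr) + \alpha\,P_p\bigl(|\theta_K - \pi/2| > \epsilon\bigr) \;\xrightarrow[r+s\to\infty]{}\; 1-\delta.
\end{equation*}
Combined with the trivial upper bound $R \leq 1$ (from $\mathbb{E}[W(f)] \leq m(s+1,1,n+1) \leq SD(f)$), letting $\delta\to 0$ proves $R\to 1$. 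The main obstacle --- the divergence of $g$ near $\theta = 0$, which could have spoiled dominated-convergence type arguments --- is harmlessly absorbed by the universal lower bound $g\geq\alpha$, so the proof reduces to the concentration estimate of the preceding paragraph.
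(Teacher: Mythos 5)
Your proposal is correct, and it takes a genuinely different route from the paper's. The paper works directly with the sums: it splits at the crossing point $k^*=\frac{s(r-2)-1}{r+s}$ where $\frac{2}{\pi}\arccos(1-x)$ crosses $x$, derives the sandwich $1-\frac{\beta_{r,s}^2-\delta_{r,s}^2}{\beta_{r,s}}\leq\frac{\gamma_{r,s}}{\beta_{r,s}}\leq 1$, then asserts (without proof) that the lower bound is monotone decreasing in $s$ for fixed $r$ so as to reduce to the extreme case $s=r-2$, which it finishes by explicit central-binomial asymptotics of the tail sums $\varphi(0),\varphi(1),\varphi(2)$. You instead recognize the ratio as $\mathbb{E}[g(\theta_K)]$ for $g(\theta)=\frac{2}{\pi}\frac{\theta}{1-\cos\theta}$ with $K-1$ hypergeometric (your identities $k(k+1)\binom{s+1}{k+1}=s(s+1)\binom{s-1}{k-1}$, the Vandermonde normalization, and the mean/variance computations all check out, giving $\mathbb{E}[\cos\theta_K]=-2/(r+s-2)$ and $\operatorname{Var}(\cos\theta_K)=O(1/(r+s))$), and then run a Chebyshev concentration argument: $\theta_K\to\pi/2$ in probability, $g$ is continuous at $\pi/2$ with $g(\pi/2)=1$, and the universal bound $g\geq\alpha$ absorbs the exceptional event where $g$ could blow up near $0$; the upper bound $R\leq 1$ is correctly imported from $\mathbb{E}[W(f)]\leq m\leq SD(f)$ rather than claimed pointwise (indeed $g>1$ for $\theta<\pi/2$). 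Your approach buys uniformity over all admissible $(r,s)$ with $1\leq s\leq r-2$ (including $s$ fixed and $r\to\infty$, where the variance vanishes identically for $s=1$) and entirely sidesteps the paper's unjustified monotonicity reduction, while also explaining conceptually why the limit is $1$; the paper's approach is more elementary and yields explicit error terms in the regime $s=r-2$, but at the cost of that reduction step and the ad hoc binomial estimates.
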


\begin{proof} Note that for $k\leq\frac{s(r-2)-1}{r+s}=k^*$
$$\frac{(r+s)(k+1)}{(s+1)(r-1)}\leq\frac{2}{\pi}\arccos\left(1-\frac{(r+s)(k+1)}{(s+1)(r-1)}\right)\leq 1$$
and for $k\geq\frac{s(r-2)-1}{r+s}$
$$\frac{(r+s)(k+1)}{(s+1)(r-1)}\geq\frac{2}{\pi}\arccos\left(1-\frac{(r+s)(k+1)}{(s+1)(r-1)}\right)\geq 1.$$
 Let $\gamma_{r,s}=\frac{2}{\pi}\sum_{k=0}^s \binom{s+1}{k+1}\binom{r-1}{k+1}k\arccos\left(1-\frac{(r+s)(k+1)}{(s+1)(r-1)}\right)$, $\delta_{r,s}=\sum_{k=0}^s \binom{s+1}{k+1}\binom{r-1}{k+1}k$ and $\beta_{r,s}=\sum_{k=0}^s \binom{s+1}{k+1}\binom{r-1}{k+1}k\frac{(r+s)(k+1)}{(s+1)(r-1)}$.
Hence
$$1-\frac{\beta_{r,s}^2-\delta_{r,s}^2}{\beta_{r,s}}\leq\frac{\gamma_{r,s}}{\beta_{r,s}}\leq 1.$$
with $\beta_{r,s}^2=\sum_{\lfloor k^*+1\rfloor}^{s} \binom{s+1}{k+1}\binom{r-1}{k+1}k\frac{(r+s)(k+1)}{(s+1)(r-1)}$. and $\delta_{r,s}^2=\sum_{k=\lfloor k^*+1\rfloor}^s \binom{s+1}{k+1}\binom{r-1}{k+1}k$. 
Now, for fix $r$, the expression on the left decreases as $s$ increases, then we will show the result for $s=r-2$. In this case $k^*=\frac{s-1}{2}$. Let $b(s)=\binom{s}{\lceil\frac{s}{2}\rceil}^2$, then by Lemma \ref{combinatAr} we have
$$\varphi(0):=\sum\limits_{k=\lfloor k^*+1\rfloor}^s \binom{s+1}{k+1}^2=\frac{1}{2}\binom{2(s+1)}{s+1}+O(b(s))$$
$$\varphi(1):=\sum\limits_{k=\lfloor k^*+1\rfloor}^s (k+1)\binom{s+1}{k+1}^2=\frac{1}{2}(s+1)\left(\binom{2s+1}{s}+O(b(s))\right)$$
and
$$\varphi(2):=\sum\limits_{k=\lfloor k^*+1\rfloor}^s k(k+1)\binom{s+1}{k+1}^2=\frac{1}{2}s(s+1)\left(\binom{2s}{s-1}+O(b(s))\right).$$ 
Hence
\begin{align*}
\beta^2_{s+2,s}-\delta^2_{s+2,s}&=\frac{2}{s+1}\varphi(2)-\varphi(1)+\varphi(0)\\
&=\binom{2(s+1)}{s+1}\frac{2s^2-3s-1}{4s^2}+sO(b(s)).
\end{align*}
Since $\beta_{s+2,s}=\binom{2(s+1)}{s+1}\frac{s^2}{2s+1}$, we have the desired result.
\end{proof}

\subsection{Type $D$ multigraphs.}
In this section we study the multigraphs $C_{2,2,n}$. Recall that $r=n+2$. We begin by some elementary combinatorial lemmas.

\begin{lemma} \label{combinatevensubs}  Let $E(r)$ and $O(r)$ denote the subsets of $[r]$ of even and odd size respectively. The following statements hold:
\begin{enumerate} 
\item{\label{implication} Let $j$ be a nonnegative integer. If for $0\leq s\leq j$ we have
\[\sum_{S\in E(r-1)} |S|^s=\sum_{S\in O(r-1)}|S|^s\] then the same equalities holds when the sums run over subsets of $[r]$.}
\item{\label{eq} For $r\geq 3$ and $j=0,1,2$ we have
\[\sum_{S\in E(r)} |S|^j=\sum_{S\in O(r)}|S|^j\]
}
\item{\label{eq2} For $r\geq 3$ we have
\[\begin{array}{l}
\sum_{S\in E(r)} 1= 2^{r-1}\\
\sum_{S\in E(r)}|S| = r2^{r-2}\\
\sum_{S\in E(r)}|S|^2 = r(r-1)2^{r-3} + r2^{r-2}\\
\end{array}
\]
}
\end{enumerate}
\end{lemma}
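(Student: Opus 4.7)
The plan is to prove the three claims in order, with (1) the inductive workhorse, (2) a short bootstrap from a small base case, and (3) a direct computation that invokes (2). For (1), the idea is to split every subset $S\subseteq[r]$ according to whether $r\in S$: subsets not containing $r$ are precisely the subsets of $[r-1]$ of the same parity, while $S\ni r$ corresponds bijectively to $T\subseteq[r-1]$ via $S=T\cup\{r\}$, a correspondence sending $E(r)\cap\{S:r\in S\}$ to $O(r-1)$ and $O(r)\cap\{S:r\in S\}$ to $E(r-1)$ and satisfying $|S|=|T|+1$. Consequently
\[
\sum_{S\in E(r)}|S|^j-\sum_{S\in O(r)}|S|^j = \bigl(\Sigma^E_{r-1}(j)-\Sigma^O_{r-1}(j)\bigr) + \sum_{T\in O(r-1)}(|T|+1)^j-\sum_{T\in E(r-1)}(|T|+1)^j,
\]
where $\Sigma^E_{r-1}(s):=\sum_{T\in E(r-1)}|T|^s$ and analogously for $\Sigma^O_{r-1}(s)$. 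Expanding $(|T|+1)^j$ by the binomial theorem rewrites the second piece as a $\mathbb{Z}$-linear combination of the differences $\Sigma^E_{r-1}(s)-\Sigma^O_{r-1}(s)$ for $0\le s\le j$, each of which vanishes by the inductive hypothesis; this yields (1).

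For (2), I will induct on $r$ with base case $r=3$, which I would verify by direct enumeration of the eight subsets of $[3]$: each of $E(3)$ and $O(3)$ has four elements and matches in both the total size and the sum of squared sizes. The inductive step is then (1) applied simultaneously with $j=0,1,2$, using that the inductive hypothesis supplies the three lower-order equalities required to feed into the binomial expansion above.

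For (3), the cardinality identity $|E(r)|=2^{r-1}$ is $\bigl((1+1)^r+(1-1)^r\bigr)/2$. The remaining two identities reduce to splitting the standard sums $\sum_k k\binom{r}{k}=r2^{r-1}$ and $\sum_k k^2\binom{r}{k}=r(r-1)2^{r-2}+r2^{r-1}$ into their even and odd parts; by (2) the two parities contribute equally, so each half equals the claimed value. The only delicate point in the whole argument, and where I would focus attention, is the base case: at $r=2$ the identity in (2) already fails for $j=2$, so the induction cannot be launched a step earlier. Because the hypothesis of (1) requires matching moments at \emph{all} orders $s\le j$, it is essential to begin the induction at $r=3$ and verify all three moments there simultaneously; this is what makes the recursion self-sustaining and allows (3) to close at the end.
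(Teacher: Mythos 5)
Your proposal is correct and follows essentially the same route as the paper: split subsets of $[r]$ by whether they contain $r$, expand $(|T|+1)^j$ to reduce to the lower moments on $[r-1]$, verify the base case $r=3$ directly, and obtain (3) by halving the standard binomial moment sums. Your remark that the induction cannot start at $r=2$ (where the second moments differ) is a correct and worthwhile clarification, though not needed beyond what the paper does.
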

\begin{proof} (\ref{implication}) The following equalities hold:
\[ \sum_{S\in E(r)} |S|^j = \sum_{S\in E(r), S\ni r} |S|^j + \sum_{S\in E(r),S\not\ni r} |S|^j = \sum_{S=S'\cup \{r\},S'\in O(r-1)} (|S'|+1)^j + \sum_{S\in E(r-1)} |S|^j\]
by our assumptions the latter equals
\[\sum_{S=S'\cup \{r\},S'\in E(r-1)} (|S'|+1)^j + \sum_{S\in O(r-1)} |S|^j = \sum_{S\in O(r)} |S|^j.\]
(\ref{eq}) When $r=3$ the equalities hold for $j=0,1,2$ by direct calculation. The claim follows immediately from part (\ref{implication}). (\ref{eq2}) By part (\ref{eq}) it suffices to divide by two the sum over all subsets of $[r]$. Now,
\[
\sum_{S\subseteq r} |S|^j = 
\begin{cases} 
2^{r} & \text{if $j=0$}\\
r2^{r-1} & \text{if $j=1$}\\
r(r-1)2^{r-2} + r2^{r-1} & \text{if $j=3$}\\
\end{cases}
\]
and the claim follows. \end{proof}
Next, assume $r\geq 5$ and let $M$ be the adjacency matrix of $G_{2,2,n}$. Define $B:=M-I$
\begin{lemma} \label{spectrumDr} The matrix $B$ satisfies the equation 
$B^2=(q - 2^{r-3})J - 2^{r-3}B$ where $q:=(r^2-7r+16)2^{r-5}$.
As a result $C_{2,2,n}$ is a strongly regular multigraph and its spectrum consists of 
$1-2^{r-3} < 1 < 1+(r-4)2^{r-3}$ with multiplicities $r$, $2^{r-1}-(r+1)$ and one respectively.
\end{lemma}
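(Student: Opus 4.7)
The plan is to establish the quadratic identity $B^2 = (q-2^{r-3})J - 2^{r-3}B$ by computing a single row, and then to read off strong regularity and the spectrum via Theorem~\ref{multigraphSpectra}. The key observation is that for any $T \in E(r)$ the map $\sigma_T : U \mapsto U \ast T$ is an involution on $E(r)$ satisfying $\sigma_T(U) \ast \sigma_T(V) = U \ast V$, hence preserves all weights $|U \ast V|/2 - 1$ and defines an automorphism of $C_{2,2,n}$ sending $\emptyset$ to $T$. Consequently $B^2_{T,S} = B^2_{\emptyset,\,T \ast S}$ depends only on $T \ast S$, and it suffices to verify
\[
B^2_{\emptyset, S} \;=\; (q - 2^{r-3}) - 2^{r-3}\, B_{\emptyset, S}
\]
for every $S \in E(r)$.

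Next, I would expand
\[
B^2_{\emptyset, S} \;=\; \sum_{U \in E(r)}\Bigl(\tfrac{|U|}{2}-1\Bigr)\Bigl(\tfrac{|U\ast S|}{2}-1\Bigr)
\]
and use $|U\ast S| = |U|+|S|-2|U\cap S|$ to rewrite the right-hand side as a linear combination of the sums $\sum_U 1$, $\sum_U |U|$, $\sum_U |U|^2$, and $\sum_U |U|\,|U\cap S|$ taken over $U \in E(r)$. The first three are supplied directly by Lemma~\ref{combinatevensubs}(\ref{eq2}). For the fourth, a double-counting argument gives
\[
\sum_{U \in E(r)} |U|\,|U\cap S| \;=\; \sum_{j \in S}\sum_{i \in [r]} |\{U \in E(r) : i,j \in U\}|,
\]
and splitting into the cases $i=j$ (contributing $2^{r-2}$ per $j$) and $i \neq j$ (contributing $2^{r-3}$ per ordered pair) evaluates this to $|S|(r+1)2^{r-3}$. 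Substituting these four values and collecting the result over the common factor $2^{r-5}$ should then reduce $B^2_{\emptyset,S}$ to the clean expression $q - 2^{r-4}|S|$; since $B_{\emptyset,S} = |S|/2 - 1$, this equals $(q-2^{r-3}) - 2^{r-3} B_{\emptyset,S}$, proving the matrix identity.

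For the spectral consequences, rewriting in terms of $M = B + I$ one sees that $M^2_{ij}$ is an affine function of $M_{ij}$ off the diagonal, that $MJ = dJ$ with $d = 1 + (r-4)2^{r-3}$ by applying the identity (or Perron--Frobenius) to $\mathbf{1}$, and that $M^2_{ii} = q - 1$ is constant; thus $C_{2,2,n}$ is a strongly regular multigraph in the sense of Section~\ref{srmgs}. The identity restricted to $\mathbf{1}^\perp$ reads $B^2 = -2^{r-3}B$, so $B$ has eigenvalues $0$ and $-2^{r-3}$ on that subspace, while on $\mathbf{1}$ itself the Perron eigenvalue of $B$ is $(r-4)2^{r-3}$; correspondingly $M = B + I$ has the three eigenvalues $1 - 2^{r-3} < 1 < 1 + (r-4)2^{r-3}$. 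The multiplicities $r$, $2^{r-1}-(r+1)$, $1$ are then pinned down by $|V|=2^{r-1}$ together with $\mathrm{tr}(B) = -2^{r-1}$, exactly as in the proof of Theorem~\ref{multigraphSpectra}. The main obstacle is the algebraic bookkeeping that collapses the expansion to $q - 2^{r-4}|S|$: after the double-counting formula is in place, the remainder of the argument is essentially mechanical.
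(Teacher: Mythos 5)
Your proof is correct, and it reaches the row identity by a genuinely different computation than the paper. Both arguments share the same skeleton: reduce to a single row of $B^2$ by transitivity, verify the quadratic relation there, and then read off the spectrum from the resulting matrix equation exactly as in Theorem~\ref{multigraphSpectra}. The differences are two. First, where the paper invokes transitivity of the Weyl group of $D_r$ on $C_{2,2,n}$, you exhibit the explicit automorphism $\sigma_T(U)=U\ast T$; this is more self-contained and makes the reduction $B^2_{T,S}=B^2_{\emptyset,T\ast S}$ transparent without any appeal to the Lie-theoretic realization. Second, and more substantively, the paper computes $B^2_{\emptyset T}$ by stratifying $E(r)$ according to $k=|S\cap T|$ and then pairing the strata $E_T(k)$ and $E_T(2t-k)$ via the bijection $S\cap T\mapsto T\setminus(S\cap T)$, which cancels the first-moment contributions and leaves only $\sum_k(t-k)^2\binom{2t}{k}=t\,2^{2t-1}$ to evaluate; you instead expand $|U\ast S|=|U|+|S|-2|U\cap S|$ and reduce everything to the moments $\sum_U 1$, $\sum_U|U|$, $\sum_U|U|^2$ from Lemma~\ref{combinatevensubs} plus the cross-moment $\sum_U|U|\,|U\cap S|=|S|(r+1)2^{r-3}$, which your double count correctly evaluates (the $i=j$ terms give $2^{r-2}$ each and the $i\neq j$ terms give $2^{r-3}$ each). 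I checked the collapse: the three contributions are $q$, $|S|(r-4)2^{r-4}$, and $-|S|(r+1)2^{r-4}+|S|2^{r-2}$, summing to $q-2^{r-4}|S|=(q-2^{r-3})-2^{r-3}B_{\emptyset,S}$ as you claim. The paper's pairing trick is slicker in that it never needs the cross-moment; your version is more mechanical but requires no cleverness beyond bookkeeping, and the multiplicity count via $|V|=2^{r-1}$ and $\operatorname{tr}(B)=-2^{r-1}$ is also correct.
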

\begin{proof} Let $q:=B^2_{\emptyset\emptyset}=\sum_{S\subseteq E(r)} \left(\frac{|S|}{2}-1\right)^2$. By Lemma~\ref{combinatevensubs} this quantity equals $(r^2-7r+16)2^{r-5}$. 
Since the Weyl group of $D_r$ acts transitively on the multigraph $C_{2,2,r}$ to prove that $B$ satisfies the above quadratic equation it suffices to show that for every nonempty $T\subseteq [r]$ with $|T|=2t$ the equality $B^{2}_{\emptyset T} = q+2^{r-3}-2^{r-3}B_{\emptyset T}$ holds. 

Define $E_T(k)=\{S\in E(r): |S\cap T|=k\}$ and note that $B^2_{\emptyset T}$ equals
\[\sum_{S\in E(r)} B_{\emptyset S}B_{ST} = \sum_{k=0}^{2t}\sum_{S\in E_T(k)} B_{\emptyset S}B_{ST} =\sum_{k=0}^{2t}\sum_{S\in E_T(k)} \left(\frac{|S|}{2}-1\right)\left(\frac{|S|+2(t-k)}{2}-1\right)\]
which equals
\[q +\sum_{k=0}^{t-1} (t-k)\left( \sum_{S\in E_T(k)}\left(\frac{|S|}{2}-1\right)-\sum_{S\in E_T(2t-k)}\left(\frac{|S|}{2}-1\right)\right)\]
now, replacing $T\cap S$ with its complement in $T$ determines a bijection between $E_T(k)$ and $E_T(2t-k)$ so the last term equals
\[ q -\sum_{k=0}^{t-1} (t-k)^2 |E_T(k)| = q - \sum_{k=0}^{t-1} (t-k)^2\binom{2t}{k}2^{r-2t-1}= q -2^{r-2t-2}\sum_{k=0}^{2t}(t-k)^2\binom{2t}{k}\]
Finally, the equality $\sum_{k=0}^{2t} \binom{2t}{k}(t-k)^2 = t2^{2t-1}$ implies that
\[  q -2^{r-2t-2}\sum_{k=0}^{2t}(t-k)^2\binom{2t}{k} = q - t2^{r-3}\] 
now $t=B_{\emptyset T} +1$ and thus $B$ satisfies the matrix equation claimed above. 
To find the spectrum argue as in the proof of Theorem~\ref{multigraphSpectra} using the quadratic equation. 
\end{proof}

\begin{lemma}\label{computDr} Let $f:V(G_{2,2,n})\rightarrow K^{\perp}$ be the normalized orthogonal projection and let $\lambda_1$ be the smallest eigenvalue of the adjacency matrix $M$. 
\begin{enumerate}
\item{$SD(f) = (r-3)2^{2r-6}$ }
\item{ $M_{2,2,n}-\lambda_1I\succeq 0$ so $(-\lambda_1,\dots, -\lambda_1)$ is a feasible point for the dual problem and the value of the dual objective function at this point is
$SD^{*}(\lambda_1) = (r-3)2^{2r-6}$.}
\item{ The equality $SD(f)=SD^*(\lambda_1)$ holds. In particular $f$ is an optimal embedding of the multigraph $C_{2,2,n}$ into $K^{\perp}$.}
\item{ The expected value of a random cut obtained from this optimal embedding is
\[\mathbb{E}(W)=\frac{2^{r-2}}{\pi}\sum_{k=0}^{\lfloor\frac{r}{2}\rfloor-1}k\arccos\left(1-\frac{4(k+1)}{r}\right)\]
}
\end{enumerate}
\end{lemma}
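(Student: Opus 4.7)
The plan is to specialize the general machinery of Lemma~\ref{evaluation} to the parameters $a=2$, $b=2$, $c=n$, so that $r=b+c=n+2$, and use the combinatorial model $G_{2,2,n}\cong C_{2,2,n}$ together with Lemma~\ref{combinatevensubs} to evaluate the resulting sums, and Lemma~\ref{spectrumDr} to supply the spectral data. A direct calculation gives $\delta=bc+ac+ab-abc=4$ and $ac-a-c=n-2=r-4$, so the global scaling in Lemma~\ref{evaluation} is $\delta/(\delta+ac-a-c)=4/r$. The vertex set has cardinality $|V|=2^{r-1}$, and by transitivity the multiplicity of weight $k$ obtained by fixing the vertex $\emptyset$ is the number of even $T\subseteq[r]$ with $\tfrac{|T|}{2}-1=k$, so
\[S_k \;=\; \frac{|V|}{2}\binom{r}{2(k+1)}\;=\;2^{r-2}\binom{r}{2(k+1)}.\]

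For claim (1), Lemma~\ref{evaluation}(1) reduces $SD(f)$ to $\tfrac{2}{r}\sum_{k}S_k\,k(k+1)$. Setting $j=k+1$ and plugging in $S_k$ gives $2^{r-2}\sum_{j\geq 1}\binom{r}{2j}(j^2-j)$. Using Lemma~\ref{combinatevensubs}(3), we rewrite
\[\sum_{j}j\binom{r}{2j}=\tfrac{1}{2}\sum_{S\in E(r)}|S|=r\,2^{r-3},\qquad \sum_{j}j^{2}\binom{r}{2j}=\tfrac{1}{4}\sum_{S\in E(r)}|S|^{2}=r(r-1)2^{r-5}+r\,2^{r-4},\]
which combine to yield $\sum_{j\geq 1}\binom{r}{2j}(j^{2}-j)=r(r-3)2^{r-5}$. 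Multiplying by $2^{r-2}$ and then by $2/r$ produces $SD(f)=(r-3)2^{2r-6}$.

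For claim (2), Lemma~\ref{spectrumDr} gives $\lambda_{1}=1-2^{r-3}$, hence $M-\lambda_{1}I\succeq 0$ and $\gamma=-\lambda_{1}(1,\dots,1)$ is feasible for the dual. Lemma~\ref{evaluation}(3) evaluates $SD^{*}(\gamma)=\tfrac{1}{2}\sum_{k}kS_k-\tfrac{|V|\lambda_{1}}{4}$. The same identities of Lemma~\ref{combinatevensubs}(3) give $\sum_{k}kS_k=2^{r-2}\sum_{j\geq 1}\binom{r}{2j}(j-1)=r\,2^{2r-5}-2^{2r-3}+2^{r-2}$, and $\tfrac{|V|\lambda_{1}}{4}=2^{r-3}-2^{2r-6}$. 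Substituting and collecting the powers of $2$ gives $SD^{*}(\gamma)=(r+1)2^{2r-6}-2^{2r-4}=(r-3)2^{2r-6}$, as required. Claim (3) is then immediate from weak duality: the chain $SD(f)\leq SD(G_{2,2,n})=SD^{*}(G_{2,2,n})\leq SD^{*}(\gamma)$ collapses to equalities, and so $f$ achieves the optimum of the primal relaxation. Finally, claim (4) is obtained by pulling the factor $2^{r-2}$ out of the sum in Lemma~\ref{evaluation}(2), with the argument of $\arccos$ becoming $1-\tfrac{4(k+1)}{r}$.

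No step presents a conceptual obstacle; the only bookkeeping care is in converting the power-sum identities of Lemma~\ref{combinatevensubs}(3) into sums over $j$ (i.e., over the half-cardinalities that index $S_k$) without mishandling the $j=0$ term, which contributes to $|V|/2$ via $S_{-1}$-type boundary effects but vanishes in the weighted sums defining $SD(f)$ and $\mathbb{E}[W(f)]$.
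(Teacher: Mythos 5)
Your proposal is correct and follows essentially the same route as the paper: compute $S_k = 2^{r-2}\binom{r}{2(k+1)}$ by transitivity, specialize Lemma~\ref{evaluation} with $\delta/(\delta+ac-a-c)=4/r$, evaluate the resulting sums via Lemma~\ref{combinatevensubs}, take $\lambda_1=1-2^{r-3}$ from Lemma~\ref{spectrumDr}, and conclude by weak duality (your explicit power-of-two bookkeeping is just a fuller version of what the paper leaves to the reader). The only discrepancy is with the printed statement of part (4), which omits the factor $\binom{r}{2(k+1)}$ inside the sum; your derivation (and the paper's own Lemma~\ref{limitsDr}) shows it should be present, so that is a typo in the statement rather than a flaw in your argument.
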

\begin{proof} Recall that $S_k(2,2,n)$ is the number of edges of weight $k$ in $C_{2,2,n}$. By transitivity of the action of the Weyl group of $D_r$ on $C_{2,2,n}$ the number of edges of weight $k$ incident to every vertex $V$ is the same and is easily seen to be $\binom{r}{2(k+1)}$ when $V=\emptyset$. Summing over all exceptional classes we see that
\[2S_k=2^{r-1}\binom{r}{2(k+1)}.\] 
From this identity together with Lemma~\ref{evaluation} part $(1.)$ applied with  $a=b=2$, $c=n$ and $r=n+2$  we have 
\[SD(f)=2^{r-3} \sum_{k=0}^{\lfloor \frac{r}{2}\rfloor -1} \binom{r}{2(k+1)}\frac{4(k+1)k}{r} = (r-3)2^{2r-6}\]
and the last equality follows from Lemma~\ref{combinatevensubs}.  $(2.)$  From Lemma~\ref{spectrumDr} we know $\lambda_1=1-2^{r-3}$ As a result by Lemma~\ref{evaluation} part $(3.)$ we have
\[SD^*(-\lambda_1)=2^{r-3} \left(\sum_{k=0}^{\lfloor \frac{r}{2}\rfloor -1} \binom{r}{2(k+1)}k + 2^{r-3}-1\right) = (r-3)2^{2r-6}\]
where the last equality follows from Lemma~\ref{combinatevensubs}. $(3.)$ Optimality follows from weak duality.
$(4)$ The claim follows from Lemma~\ref{evaluation} part $(2.)$ and the above expression for $S_k$.
\end{proof}
Finally we show that the essential performance ratio of the graphs $C_{2,2,n}$ is equal to one. To this end we need the following combinatorial Lemma,
\\

\begin{lemma}\label{phi}
Let $b(r)=\binom{r}{\lceil\frac{r}{2}\rceil}$. For $r\geq5$ the following statements hold:
\begin{enumerate}
\item \label{sets}
\[\begin{array}{l}
\sum\limits_{S\in E(r),|S|\geq \frac{r}{2}} 1= 2^{r-2}+O(b(r))\\
\sum\limits_{S\in E(r),|S|\geq \frac{r}{2}}|S| = r(2^{r-3}+O(b(r)))\\
\sum\limits_{S\in E(r),|S|\geq \frac{r}{2}}|S|(|S|-1) = r(r-1)(2^{r-4} + O(b(r)))\\
\end{array}
\]
\item \label{eqphi}$\sum\limits_{k=\lceil\frac{r}{4}\rceil-1}^{\lfloor \frac{r}{2}\rfloor -1} \binom{r}{2(k+1)}k\left(\frac{4(k+1)}{r}-1\right)=O(2^r)$.
\end{enumerate}
Here $O(\cdot)$ refers to the big-O notation and not to number of odd sets.
\end{lemma}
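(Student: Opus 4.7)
My plan is to prove Part~(1) by combining the known total moments with partial alternating-sum identities. Writing the even-subset projector as $\frac{1+(-1)^m}{2}$, I decompose, for $g(m) = 1,\, m,\, m(m-1)$,
\[
\sum_{\substack{S\in E(r)\\|S|\geq r/2}} g(|S|) \;=\; \frac{1}{2}\Bigl[\sum_{m\geq r/2} g(m)\binom{r}{m} \;+\; \sum_{m\geq r/2}(-1)^m g(m)\binom{r}{m}\Bigr].
\]
The ``direct'' half is handled by the involution $m\leftrightarrow r-m$: the total $\sum_m g(m)\binom{r}{m}$ equals $2^r$, $r\,2^{r-1}$, or $r(r-1)\,2^{r-2}$ respectively, and splits into two halves differing only by a central-mass correction of size $O(b(r))$, $O(r\,b(r))$, or $O(r^2\,b(r))$. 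The ``alternating'' half is evaluated from the identity
\[
\sum_{m=0}^{n}(-1)^m m^{\underline{j}}\binom{r}{m} \;=\; r^{\underline{j}}\,(-1)^n\binom{r-j}{n-j},
\]
which follows by substituting $m^{\underline{j}}\binom{r}{m}=r^{\underline{j}}\binom{r-j}{m-j}$ into the telescoping identity $\sum_{m=0}^n (-1)^m\binom{N}{m} = (-1)^n\binom{N-1}{n}$. Applied at $n\approx r/2$, the alternating tail is bounded by $O(r^j\,b(r))$. Assembling the two halves and converting falling factorials back to ordinary moments yields the three asymptotic formulas of Part~(1).

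For Part~(2) I change variables via $m = 2(k+1)$ to rewrite
\[
\Sigma \;=\; \frac{1}{2r}\sum_{\substack{m\text{ even}\\m\geq r/2}}\binom{r}{m}(m-2)(2m-r),
\]
and I use the identity $\sum_{m\text{ even}}\binom{r}{m}(m-2)(2m-r) = r\,2^{r-2}$ (a direct consequence of the full moment identities together with the vanishing alternating moments $\sum_m(-1)^m m^j\binom{r}{m} = 0$ for $j\leq 2$ and $r\geq 3$) to obtain $\Sigma = 2^{r-3} - \Sigma_{-}/(2r)$, where $\Sigma_{-} := \sum_{m\text{ even},\,m<r/2}\binom{r}{m}(m-2)(2m-r)$. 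The task then reduces to bounding $|\Sigma_{-}|$.

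The main obstacle is controlling $|\Sigma_{-}|$ sharply. Expanding $(m-2)(2m-r) = 2m^2-(r+4)m+2r$ and inserting the Part~(1) estimates applied to the lower-half moments gives a naive bound of the form $|\Sigma_-| = O(\mathrm{poly}(r)\cdot 2^r)$, from which $\Sigma = O(\mathrm{poly}(r)\cdot 2^r)$. To sharpen this to the stated $\Sigma = O(2^r)$ one exploits the algebraic identity
\[
(m-2)(2m-r) + \bigl((r-m)-2\bigr)\bigl(2(r-m)-r\bigr) \;=\; (2m-r)^2,
\]
which shows that under the involution $m\mapsto r-m$ the summand symmetrizes to a perfect square; combined with the vanishing of $\sum_m(-1)^m (2m-r)^2\binom{r}{m}$ for $r\geq 3$ this produces a cancellation among the leading-order error terms in the direct propagation. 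For the downstream application to the essential performance ratio of $\{C_{2,2,n}\}$ the precise polynomial factor is immaterial, since $\Sigma$ appears divided by $SD(f) = (r-3)\,2^{2r-6}$ and any bound of the form $\Sigma = O(\mathrm{poly}(r)\cdot 2^r)$ already forces the relevant ratio to tend to zero.
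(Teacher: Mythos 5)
Part~(1) of your argument is sound and is essentially a fleshed-out version of what the paper does (halve the full even-subset moments of Lemma~\ref{combinatevensubs} and absorb the central and alternating corrections); one small caveat is that for $g(m)=m$ and $g(m)=m(m-1)$ the discrepancy between the two halves is not a single central term but the spread-out quantity $\sum_{m\geq r/2}(2m-r)\binom{r}{m}=\Theta(\sqrt{r}\,2^{r})$, which is nevertheless $O(r\,b(r))$, so your stated error terms are correct.

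Part~(2) is where the real trouble lies, and the trouble is shared with the paper. Your reduction $\Sigma=2^{r-3}-\Sigma_{-}/(2r)$ and the identity $\sum_{m\ \mathrm{even}}\binom{r}{m}(m-2)(2m-r)=r2^{r-2}$ check out, but the proposed sharpening to $O(2^{r})$ cannot work: writing $\Sigma_{\geq}$ and $\Sigma_{\leq}$ for the sums of $\binom{r}{m}(m-2)(2m-r)$ over even $m$ with $m\geq r/2$ and $m\leq r/2$, your symmetrization identity controls only $\Sigma_{\geq}+\Sigma_{\leq}=r2^{r-2}$, whereas the antisymmetric part is $\Sigma_{\geq}-\Sigma_{\leq}=(r-4)\sum_{m\ \mathrm{even},\,m\geq r/2}\binom{r}{m}(2m-r)=\Theta(r^{3/2}2^{r})$ and dominates. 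Hence $\Sigma=\Sigma_{\geq}/(2r)=\Theta(\sqrt{r}\,2^{r})$; for instance at $r=16$ the sum equals $13105\approx 0.2\cdot 2^{16}$, matching $2^{r-3}\sqrt{r/(2\pi)}$. So the bound $O(2^{r})$ asserted in the lemma is simply false and no proof of it exists --- the paper's own one-line computation reaches it only by silently dropping the factors $r$ and $r(r-1)$ multiplying the $O(b(r))$'s of part~(1). Your fallback claim is also wrong: in Lemma~\ref{limitsDr} this sum is multiplied by $2^{r-3}$ and divided by $\beta_{r}=(r-3)2^{2r-6}$, so what is needed is $\Sigma=o(r\,2^{r})$, and a generic $O(\mathrm{poly}(r)\,2^{r})$ bound (say $O(r^{2}2^{r})$) would not give convergence. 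The correct repair is to carry out your ``naive'' propagation with care: it gives $\Sigma=2^{r-4}+O(r\,b(r))=O(\sqrt{r}\,2^{r})$, which is both true and sufficient for the limit in Lemma~\ref{limitsDr}; the attempted improvement should be deleted rather than patched.
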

\begin{proof}
(\ref{sets}) Follows from Lemma \ref{combinatevensubs} (\ref{eq2}). (\ref{eqphi}) Let 
$$\phi(0):=\sum\limits_{k=\lceil\frac{r}{4}\rceil-1}^{\lfloor \frac{r}{2}\rfloor -1} \binom{r}{2(k+1)}=\sum\limits_{S\in E(r),|S|\geq \frac{r}{2}} 1,$$
$$\phi(1):=\sum\limits_{k=\lceil\frac{r}{4}\rceil-1}^{\lfloor \frac{r}{2}\rfloor -1} 2(k+1)\binom{r}{2(k+1)}=\sum\limits_{S\in E(r),|S|\geq \frac{r}{2}} |S|$$
and
$$\phi(2):=\sum\limits_{k=\lceil\frac{r}{4}\rceil-1}^{\lfloor \frac{r}{2}\rfloor -1} 2(k+1)(2k+1)\binom{r}{2(k+1)}=\sum\limits_{S\in E(r),|S|\geq \frac{r}{2}}|S|(|S|-1).$$ 
Hence 
\begin{align*}
\sum\limits_{k=\lceil\frac{r}{4}\rceil-1}^{\lfloor \frac{r}{2}\rfloor -1} \binom{r}{2(k+1)}&k\left(\frac{4(k+1)}{r}-1\right)\\
&=\frac{4}{r}\left(\frac{\phi(2)-3\phi(1)+4\phi(0)}{4}\right)+\left(\frac{4}{r}-1\right)\left(\frac{\phi(1)-2\phi(0)}{2}\right)\\
&=O(2^r)+O(b(r))=O(2^r).
\end{align*}
\end{proof}

\begin{lemma}\label{limitsDr} The following equality holds,
\[\lim_{r\rightarrow \infty}\frac{\lceil\frac{2^{r-2}}{\pi} \sum_{k=0}^{\lfloor \frac{r}{2}\rfloor-1} k\binom{r}{2(k+1)} \arccos(1-\frac{4(k+1)}{r})\rceil}{(r-3)2^{2r-6}}=1\]
\end{lemma}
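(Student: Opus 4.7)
The plan is to recognize the numerator, up to the ceiling, as the expected weight $\mathbb{E}(W)$ computed in Lemma~\ref{computDr} and the denominator as $SD(f)=(r-3)2^{2r-6}$. Writing $L_k:=k\binom{r}{2(k+1)}$, $A_k:=\frac{4(k+1)}{r}$ and $B_k:=\frac{2}{\pi}\arccos(1-A_k)$, Lemma~\ref{computDr} gives
\[\mathbb{E}(W)=2^{r-3}\sum_{k=0}^{\lfloor r/2\rfloor -1}L_k B_k,\qquad SD(f)=2^{r-3}\sum_{k=0}^{\lfloor r/2\rfloor -1}L_k A_k=(r-3)2^{2r-6},\]
so it suffices to prove $\sum_k L_k B_k/\sum_k L_k A_k\to 1$. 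The ceiling in the numerator adds at most one, which is dwarfed by $(r-3)2^{2r-6}\to\infty$.

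For the upper bound, I would appeal to the optimality of $f$: Lemma~\ref{computDr} gives $SD(f)=SD(G_{2,2,n})$, and any random hyperplane cut has weight at most $\MC(G_{2,2,n})\leq SD(G_{2,2,n})$, so taking expectations yields $\mathbb{E}(W)\leq SD(f)$, i.e.\ $\sum_k L_k B_k\leq \sum_k L_k A_k$.

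For the lower bound I will reuse the pointwise comparison of $\arccos(1-x)$ with $\frac{\pi x}{2}$ used in the proof of Lemma~\ref{limitsArps}: since $\arccos(1-x)$ is concave on $[0,1]$, convex on $[1,2]$, and agrees with $\frac{\pi x}{2}$ at $x\in\{0,1,2\}$, one has $A_k\leq B_k\leq 1$ whenever $A_k\leq 1$ and $1\leq B_k\leq A_k$ whenever $A_k\geq 1$. The crossover $A_k=1$ occurs at $k+1=r/4$, so the regime $A_k>1$ is exactly $k\geq\lceil r/4\rceil-1$. Bounding $B_k\geq A_k$ on the first regime and $B_k\geq 1$ on the second gives
\[\sum_k L_k B_k\;\geq\;\sum_{k:\,A_k\leq 1}L_k A_k+\sum_{k:\,A_k>1}L_k\;=\;\sum_k L_k A_k-\sum_{k=\lceil r/4\rceil-1}^{\lfloor r/2\rfloor -1}L_k(A_k-1).\]
The tail on the right is precisely the sum estimated as $O(2^r)$ in Lemma~\ref{phi}(\ref{eqphi}). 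Dividing by $\sum_k L_k A_k=(r-3)2^{r-3}$ turns the error into $O(1/r)$, completing the lower bound.

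The only genuinely technical step is the alignment of the tail correction from the $A_k>1$ regime with the sum controlled by Lemma~\ref{phi}(\ref{eqphi}); but the statement of that lemma is tailored so that the two match term by term, and no further combinatorial estimate is needed.
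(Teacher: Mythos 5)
Your proposal is correct and takes essentially the same route as the paper's proof: the same pointwise comparison of $\tfrac{2}{\pi}\arccos(1-x)$ with $x$ on either side of $x=1$, the same tail correction over $k\geq\lceil r/4\rceil-1$ controlled by Lemma~\ref{phi}(\ref{eqphi}), and the same division by $SD(f)=(r-3)2^{2r-6}$ yielding an $O(1/r)$ error. The only difference is that you spell out the justification of the upper bound $\mathbb{E}(W)\leq SD(f)$ via $\mathbb{E}(W)\leq\MC\leq SD$, which the paper leaves implicit.
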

\begin{proof} Let $\gamma_r$ be the numerator in the expression and $\beta_r$ the denominator. Note that for $k<\frac{r}{4}-1$
$$\frac{4(k+1)}{r}\leq\frac{2}{\pi}\arccos\left(1-\frac{4(k+1)}{r}\right)\leq 1$$
and for $k\geq\frac{r}{4}-1$
$$\frac{4(k+1)}{r}\geq\frac{2}{\pi}\arccos\left(1-\frac{4(k+1)}{r}\right)\geq 1.$$
Hence, by Lemma \ref{computDr}
$$\gamma_r+\beta_r^2\geq\beta_r+\delta_r^2,$$
with $\beta_r^2=2^{r-3} \sum\limits_{k=\lceil\frac{r}{4}\rceil-1}^{\lfloor \frac{r}{2}\rfloor -1} \binom{r}{2(k+1)}\frac{4(k+1)k}{r}$ and $\delta_r^2=2^{r-3} \sum\limits_{k=\lceil\frac{r}{4}\rceil-1}^{\lfloor \frac{r}{2}\rfloor -1} \binom{r}{2(k+1)}k$. Therefore
$$1-\frac{\beta_r^2-\delta_r^2}{\beta_r}\leq\frac{\gamma_r}{\beta_r}\leq 1,$$
and the result follows from Lemma \ref{phi}.
\end{proof}

\subsection{Exceptional multigraphs.}\label{Es}

In this section we focus in the two remaining minuscule graphs $G_{2,3,3}$ and $G_{2,4,3}$. These are the graphs of exceptional curves on del Pezzo surfaces of degrees $3$ and $2$.  Since contracting any of the exceptional curves of these surfaces brings us to a Del Pezzo surface of a larger degree is easy to compute the numbers $S_k(a,b,c)$  as in the table below. The spectrum can be computed directly and shown to have three values $\lambda_1<1<\eta_1$ as in the table below,
\[
\begin{array}{llllll}
T_{a,b,c} & |V| & S_1 & S_2 & \lambda_1 & \eta_1\\ 
\hline
T_{2,3,3} & 27 & 135 & 0 & -5 & 10\\
T_{2,4,3} & 56 & 756 & 28 & -11 & 29\\
\end{array}
\]
as a result we have
\begin{lemma}\label{computEs}  Let $(a,b,c)$ be either one of $(2,3,3)$ or $(2,4,3)$ and let $f:G_{a,b,c}\rightarrow K^{\perp}$ be the normalized orthogonal projection and let $\lambda_1$ be the smallest eigenvalue of $M_{a,b,c}$. Then
\begin{enumerate}
\item{ The values of $SD(f)$ and $SD^{*}(-\lambda_1)$ are given in the following table
\[
\begin{array}{l|l|l}
T_{a,b,c}  & SD(f) & SD^*(\lambda_1)\\
\hline
T_{2,3,3} & \frac{3}{4}135=\frac{405}{4} & \frac{1}{2}135-\frac{27}{4}(-5)=\frac{405}{4} \\
T_{2,4,3} &  \frac{2}{3}(756+3\cdot 28)=560 & \frac{1}{2}(756+2\cdot 28) -\frac{56}{4}(-11)=560\\
\end{array}
\]
in particular, the embedding $f$ is optimal for the geometric relaxation of maxcut.
}
\item{ The expected value of a random cut obtained from this optimal embedding is
\[
\begin{array}{l|l}
T_{a,b,c} & \mathbb{E}[W]\\ 
\hline
T_{2,3,3} & \frac{1}{\pi} 135 \arccos(1-\frac{3}{4}\cdot 2)=90\\
T_{2,4,3} & \lceil\frac{1}{\pi} (756 \arccos(1-\frac{2}{3}\cdot 2) + 56\arccos(1-\frac{2}{3}\cdot 3))\rceil = 516\\
\end{array}
\]
}
\end{enumerate}
\end{lemma}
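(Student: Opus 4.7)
The plan is to apply Lemma~\ref{evaluation} directly, using the tabulated data $(|V|,S_1,S_2,\lambda_1,\eta_1)$ for the two graphs. First I would compute the normalization constant $\delta/(\delta+ac-a-c)$ appearing throughout Lemma~\ref{evaluation}. For $(a,b,c)=(2,3,3)$ one has $\delta=bc+ac+ab-abc=9+6+6-18=3$ and $ac-a-c=1$, so the factor equals $3/4$; for $(2,4,3)$ one obtains $\delta=2$ and the factor equals $2/3$. Formula~(1) of Lemma~\ref{evaluation} then yields $SD(f)$ immediately: for $(2,3,3)$ only $S_1$ contributes, producing $\frac{3}{4}\cdot 135=\frac{405}{4}$, while for $(2,4,3)$ the value is $\frac{2}{3}(756+3\cdot 28)=\frac{2}{3}\cdot 840=560$, matching the claimed table entries.

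Next I would evaluate $SD^{*}(-\lambda_1\mathbf{1})$ via formula~(3) of Lemma~\ref{evaluation}, namely $\frac{1}{2}\sum_k kS_k-\frac{|V|\lambda_1}{4}$. Feasibility of $\gamma=-\lambda_1\mathbf{1}$ is automatic since $M_{a,b,c}-\lambda_1 I\succeq 0$ by definition of $\lambda_1$. For $(2,3,3)$ the value is $\frac{135}{2}+\frac{27\cdot 5}{4}=\frac{270+135}{4}=\frac{405}{4}$; for $(2,4,3)$ it becomes $\frac{756+2\cdot 28}{2}+\frac{56\cdot 11}{4}=406+154=560$. Since in each case $SD(f)=SD^{*}(\gamma)$, weak duality forces $f$ to be an optimal primal solution and $\gamma$ an optimal dual solution, establishing part~(1).

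Part~(2) is then an immediate substitution into formula~(2) of Lemma~\ref{evaluation}, together with the usual $1/\pi$ factor from the Goemans--Williamson rounding. For $(2,3,3)$ only $k=1$ contributes because $S_2=0$, giving $\frac{135}{\pi}\arccos(-1/2)=\frac{135}{\pi}\cdot\frac{2\pi}{3}=90$. For $(2,4,3)$ both $k=1$ and $k=2$ contribute, producing $\frac{1}{\pi}\bigl(756\arccos(-1/3)+56\arccos(-1)\bigr)$, whose ceiling equals $516$.

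The only substantive input needed is the tabulated data itself, most importantly the spectral values $\lambda_1,\eta_1$ and the edge counts $S_1,S_2$. These are classical: the incidence combinatorics of the $27$ lines on a smooth cubic surface yields $135$ simple intersection pairs and no pairs of higher multiplicity, while on a degree-$2$ del Pezzo the $56$ exceptional divisors produce $756$ simple pairs together with $28$ pairs of intersection multiplicity $2$, the latter exchanged by the Geiser involution. The spectrum can then be confirmed either by direct diagonalization of the (small) adjacency matrix or, more systematically, by verifying that each of these graphs is a strongly regular multigraph and invoking Theorem~\ref{multigraphSpectra}. This bookkeeping is the only potential obstacle, but it is entirely mechanical thanks to the finite and classically-understood nature of these two surfaces.
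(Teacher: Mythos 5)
Your proposal is correct and follows the paper's own route exactly: the paper proves this lemma by citing Lemma~\ref{evaluation} together with the tabulated data $(|V|,S_1,S_2,\lambda_1,\eta_1)$ at the start of Section~\ref{Es}, which is precisely what you do, only with the arithmetic and the classical del Pezzo incidence counts written out explicitly. No gaps.
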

\begin{proof} The results follow from Lemma~\ref{evaluation} using the information contained in the first table of this section.
\end{proof}

\subsection{Stochastic simulation of divisor cuts.}\label{Simulations}
Theorem~\ref{main} gives us a complete description of the optimal embedding $f$. Using this known optimal embedding random hyperplane cuts can be simulated very efficiently.
We carried out these simulations on a computer for several values of the parameters and summarize our results in the tables below for the infinite families of type $A$ and $D$ respectively. The number on top shows the mean value of the cuts, the second below shows the maximum cut found in the simulations, the third one shows the value of the semidefinite relaxation and the bottom one the variation coefficient.

\begin{table}[h]
\caption{Simulations of type $A$ family}
\begin{small}
        \centering
                \begin{tabular}{|c|c|c|c|c|c|}
                        \hline
		   $(r,s)$ & 1 & 2 & 3 & 4 & 5\\
		  \hline
		   4 & $\begin{array}{c} 10.986\\12\\12.5\\0.071\end{array}$ & $\begin{array}{c}74.736\\\bf{80}\\\bf{80}\\0.038\end{array}$  & - & - & -\\
		  \hline
		  5 & $\begin{array}{c}30\\30\\33.75\\0\end{array}$ & $\begin{array}{c}282.064\\300\\300.25\\0.024\end{array}$ & $\begin{array}{c}1418.7\\\bf{1575}\\\bf{1575}\\0.015\end{array}$ & - & -\\
		  \hline
		  6 & $\begin{array}{c}66.246\\70\\73.5\\0.027\end{array}$ & $\begin{array}{c}832.53\\850\\896\\0.009\end{array}$ & $\begin{array}{c}5614.4\\5880\\5953.5\\0.009\end{array}$ & $\begin{array}{c}26872\\\bf{28224}\\\bf{28224}\\0.007\end{array}$ & -\\
		  \hline
		  7 & $\begin{array}{c}127.71\\135\\140\\0.025\end{array}$ & $\begin{array}{c}2065.1\\2095\\2205\\0.005\end{array}$ & $\begin{array}{c}17433\\17775\\18375\\0.004\end{array}$ & $\begin{array}{c}102024\\105840\\106722\\0.004\end{array}$ & $\begin{array}{c}466219\\\bf{485100}\\\bf{485100}\\0.004\end{array}$\\
		  \hline
		  8 & $\begin{array}{c}223.84\\231\\243\\0.02\end{array}$ & $\begin{array}{c}4524.8\\4648\\4800\\0.007\end{array}$ & $\begin{array}{c}46723\\47215\\49005\\0.002\end{array}$ & $\begin{array}{c}328424\\334474\\342144\\0.003\end{array}$ & \\
		  \hline
 		 9 & $\begin{array}{c}365.66\\378\\393.75\\0.017\end{array}$ & $\begin{array}{c}9031.8\\9324\\9528.7\\0.007\end{array}$ & $\begin{array}{c}111959\\113246\\116944\\0.002\end{array}$  &  & \\
		  \hline
                \end{tabular}
\end{small}
\label{tableA}
\end{table} 

\begin{table}[h]
\caption{Simulations of type $D$ family}
        \centering
                \begin{tabular}{|c|c|c|c|c|c|c|}
		\hline
		$r$ & 5 & 6 & 7 & 8 & 9 & 10\\
		\hline
		 & $\begin{array}{c}28.191\\\bf{32}\\\bf{32}\\0.064\end{array}$ & $\begin{array}{c}177.968\\\bf{192}\\\bf{192}\\0.027\end{array}$ & $\begin{array}{c}948.58\\\bf{1024}\\\bf{1024}\\0.019\end{array}$ & $\begin{array}{c}4821.34\\\bf{5120}\\\bf{5120}\\0.011\end{array}$ & $\begin{array}{c}23278.8\\\bf{24576}\\\bf{24576}\\0.008\end{array}$ & $\begin{array}{c}109472\\\bf{114688}\\\bf{114688}\\0.005\end{array}$\\
		\hline
                \end{tabular}
\label{tableD}
\end{table}

\subsection{Proof of Theorem~\ref{main}.}\label{proofofMainTheorem}
\begin{proof} (\ref{spectrumTabc}.) Follows from Lemmas~\ref{spectrumArps}, ~\ref{spectrumDr} and the first table in Section~\ref{Es}. (\ref{optimalityTabc}.) The claim is proven in Lemmas~\ref{computArps}, ~\ref{computDr} and ~\ref{computEs}.  (\ref{boundsTabc}.) Recall that for the optimal embedding $f: V(G_{a,b,c})\rightarrow K^{\perp}$ we have
\[\mathbb{E}[W(f)]\leq m({a,b,c})\leq SD(f)\]
and since $m(a,b,c)$ is an integer the inequalities can be improved by adding integer parts on both sides. The values $\ell(a,b,c)$ and $u(a,b,c)$ have been computed in Lemmas~\ref{computArps}, ~\ref{computDr} and ~\ref{computEs}. Moreover from the above inequalities it follows that
\[ \frac{\mathbb{E}[W(f)]}{SD(f)}\leq \frac{m(a,b,c)}{SD(f)}\leq 1\]
and the leftmost quantity converges to one for infinite families, as shown in Lemmas~\ref{limitsArps} and ~\ref{limitsDr}.
Claim $(4)$ follows from the random hyperplane cuts shown in the previous section. We believe these equalities hold in general.
\end{proof}

\end{document}